\newcommand{\ints}{\mathbb{Z}}
\newcommand{\cplx}{\mathbb{C}}
\renewcommand{\phi}{\varphi}
\DeclareFontFamily{OT1}{rsfs}{}
\DeclareFontShape{OT1}{rsfs}{n}{it}{<-> rsfs10}{}
\DeclareMathAlphabet{\mathscr}{OT1}{rsfs}{n}{it}
\newtheorem{theorem}{Theorem}
\newtheorem{corollary}[theorem]{Corollary}
\newtheorem{lemma}[theorem]{Lemma}
\newtheorem{definition}[theorem]{Definition}
\newcommand{\e}{\tilde{e}}
\newcommand{\f}{\tilde{f}}
\newcommand{\Ch}{\operatorname{Ch}}
\newcommand{\B}{\mathcal{B}(\infty)}
\renewcommand{\L}{\mathcal{L}(\infty)}
\renewcommand{\binom}[2]{\genfrac{[}{]}{0pt}{}{#1}{#2}}
\DeclareMathOperator{\wt}{wt}
\begin{document}

\title [The $\B$ Crystal for a Certain Family of Generalized Quantum Groups]{An Explicit Description of the $\mathcal{B}(\infty)$ Crystal for Generalized Quantum Groups of a Family of Comet Quivers} \author{Uma Roy and Seth Shelley-Abrahamson}\date{\today}\maketitle

\begin{abstract}

	In \cite{Bo1} Tristan Bozec gave a definition of \emph{generalized quantum groups} that extends the usual definition of quantum groups to finite quivers with loops at vertices, and in \cite{Bo2} he introduced a theory of \emph{generalized crystals} for this new family of Hopf algebras. We explicitly characterize the generalized crystal $\mathcal{B}(\infty)$ associated to a certain family of quivers with multiple loops by providing a complete set of relations among the Kashiwara operators themselves.
	
\end{abstract}

\tableofcontents

\section {Introduction} 

Classically, as in \cite{Lu}, to a quiver without loops at a vertex one has an associated quantum group.  More recently, by considering certain categories of perverse sheaves on quiver varieties, Tristan Bozec has introduced a generalization of quantum groups associated instead to an arbitrary finite quiver, possibly with loops.  See \cite{Bo1} for full details.  In particular, Bozec associates to any finite quiver $Q$ a Hopf algebra $U = U(Q)$ over $\cplx(v)$ which coincides with the usual definition of quantum group in the case $Q$ has no loops.  $U$ shares many properties with usual quantum groups, and in particular admits a triangular decomposition $U = U^- \otimes U^0 \otimes U^+$ as in the classical case.  In analogy with the theory of crystal bases developed in \cite{Ka} for usual quantum groups, in \cite{Bo2} Bozec introduces a theory of generalized crystals for an arbitrary quiver, and in particular gives a construction of a crystal basis $(\mathcal{L}(\infty), \mathcal{B}(\infty))$ for the negative part $U^-$ of the generalized quantum group.  As in the case of quivers without loops, the crystal $\mathcal{B}(\infty)$ carries much information about the algebra $U$ itself and about its representation theory.

In this paper we study the generalized crystal $\mathcal{B}(\infty)$ associated to certain quivers called \emph{comet-shaped quivers} (with leg lengths equal to 1), which appear, for example, in \cite{HaLeRo}.  The Hopf algebra $U$ and the crystal basis $(\mathcal{L}(\infty), \mathcal{B}(\infty))$ do not depend on the orientation of the underlying quiver, so we will not specify orientations of quivers in this paper.  Given integers $\omega \geq 1$ and $r \geq 0$, let $Q(\omega, r)$ denote the quiver with vertices $\{i, j_1, ..., j_r\}$ with $\omega$ loops at vertex $i$, no loops at or edges connecting vertices $j_1, ..., j_r$, and exactly one edge connecting the vertices $i$ and $j_s$ for $1 \leq s \leq r$.  We will provide in the ``non-isotropic'' case $\omega > 1$ a remarkably simple description of the generalized crystal $\mathcal{B}(\infty)$ associated to the quiver $Q(\omega, r)$ for any $r \geq 1$ by giving a complete set of relations among the corresponding Kashiwara operators on $\mathcal{B}(\infty)$ defined in the following section.  These relations should be seen as degenerations of the commutation and Serre relations defining $U^-$.  %The ``isotropic'' case $\omega = 1$ is more complicated, although we provide a conjectural description of $\mathcal{B}(\infty)$ analogous to the non-isotropic case and partial results towards that conjecture.

\section{Acknowledgments} 

	The authors would like to thank Tristan Bozec for suggesting this project and for many useful discussions, and Pavel Etingof for helpful discussion that led to the proof of Lemma \ref{triple-binom}. Author Uma Roy would like to thank the MIT-PRIMES program and Research Science Institute for facilitating a part of this research. 

\section{Background and Definitions} In this section we recall the relevant definitions from \cite{Bo1} and \cite{Bo2}.

\subsection{The Algebra $U^-$}\label{U-} Fix a quiver $Q$, possibly with loops, with vertex set $I$.  Let $$(\cdot, \cdot) : \ints I \times \ints I \rightarrow \ints$$ be the unique symmetric bilinear form on the free abelian group $\ints I$ with $(i, i) = 2 - 2\omega_i$, where $\omega_i$ is the number of loops at vertex $i \in I$, and with $(i, j) = -n_{ij}$, where $n_{ij}$ is the number of edges connecting the vertices $i, j \in I$.  A vertex $i \in I$ is called \emph{real} if there are no loops at $i$, and otherwise is called \emph{imaginary}.  We denote by $I^{re}$ the set of real vertices, and by $I^{im}$ the set of imaginary vertices.  An imaginary vertex $i \in I^{im}$ is called \emph{isotropic} if $\omega_i = 1$, and \emph{non-isotropic} otherwise.  We denote by $I^{iso} \subset I^{im}$ the set of imaginary isotropic vertices.  Define $$I_\infty := I^{re} \cup \{(i, l) : i \in I^{im}, l \geq 1\}$$ and extend the pairing $(\cdot, \cdot)$ by defining $(j, (i, l)) = ((i,l),j)= l(j, i)$ for $j \in I^{re}$, $i \in I^{im}$, and $l \geq 1$ and $((j, k), (i, l)) = kl(j, i)$ for $i, j \in I^{im}$ and $k, l \geq 1$.

Let $A = \cplx(v)\langle F_{\iota} : \iota \in I_\infty\rangle$ be the free $\cplx(v)$-algebra on the generators $F_{\iota}$ for $\iota \in I_\infty$.  We give $A$ a $\ints I$ grading by setting $\deg(F_j) = -j$ and $\deg(F_{(i, l)}) = -li$ for $j \in I^{re}$, $i \in I^{im}$, and $l \geq 1$.  For an integer $n \in \ints$, define the $v$-analogue of $n$ by $$[n] := \frac{v^n - v^{-n}}{v - v^{-1}} = v^{n - 1} + v^{n - 3} + \ldots + v^{1 - n}$$ and, for $n \geq 1$, the $v$-analogue of $n!$ by $$[n]! := [n] \cdots [1].$$  We set $[0]! = 1$.  For $k, n \in \ints$ with $k \geq 0$, the $v$-analogue of the binomial coefficient $n \choose k$ is defined by $$\binom{n}{k} := \frac{[n][n-1]\cdots[n-k+1]}{[k]!}.$$ For a real vertex $j \in I^{re}$ and an integer $n \geq 0$, define its $n^{th}$ \emph{divided power} by $$F_j^{(n)} = \frac{1}{[n]!}F_j^n.$$  We define $U^-$ as the quotient of the free algebra $A$ by the ideal generated by the relations $$[F_{\iota}, F_{\kappa}] = 0$$ for all $\iota, \kappa \in I_\infty$ with $(\iota, \kappa) = 0$ and $$\sum_{t + t' = 1 - (j, \iota)} (-1)^tF_j^{(t)}F_{\iota}F_j^{(t')} = 0$$ for all $j \in I^{re}$ and $\iota \in I_\infty$ with $\iota \neq j$.  The relations of the first type are called \emph{commutation relations} and the relations of the second type are called \emph{Serre relations}.  Note that both types of relations are homogeneous, so $U^-$ inherits a $\ints I$-grading from $A$.

For a homogeneous element $u \in U^-$, let $|u| = \deg(u) \in \ints I$ denote its degree.  For $d \in \ints I$ let $$U^-[d] := \{u \in U^- : |x| = d\}$$ denote the homogeneous subspace of $U^-$ of degree $d$.

\subsection{Kashiwara Operators and the Crystal $\mathcal{B}(\infty)$} \label{2.2} In \cite{Bo2}, Proposition 3.11, Bozec defines certain elements $b_{i, l} \in U^-[-li]$ for all imaginary vertices $i \in I^{im}$ and positive integers $l \geq 1$.  We do not fully reproduce their definition here, but we recall some of their relevant properties.  In particular, we have $b_{i, 1} = F_{i, 1}$ and $$b_{i, l} - F_{i, l} \in \cplx(v)\langle F_{i, k} : 1 \leq k < l\rangle.$$

For an imaginary vertex $i \in I^{im}$ and a nonnegative integer $l \geq 1$, if $i$ is isotropic let $\mathcal{C}_{i, l}$ denote the set of partitions of $l$, and otherwise let $\mathcal{C}_{i, l}$ denote the set of compositions of $l$.  Let $\mathcal{C}_i := \coprod_{l \geq 0} \mathcal{C}_{i, l}$.  We denote partitions or compositions by finite lists of the form $c = (c_1, c_2, ...)$, where for partitions these lists are unordered.  For such $c = (c_1, c_2, ..., c_k) \in \mathcal{C}_i$, let $b_{i, c} = b_{i, c_1} \cdots b_{i, c_k}.$  Observe that $\{b_{i, c} : c \in \mathcal{C}_{i, l}\}$ forms a basis for $U^-[-li]$.  For convenience, if $j \in I^{re}$ is a real vertex, let $b_j = F_j$, so that we have defined $b_{\iota}$ for all $\iota \in I_\infty$.  Then we observe that the set $\{b_{\iota} : \iota \in I_\infty\}$ generates $U^-$ as an algebra and we will see in Corollary \ref{algebrahom} that the assignment $F_{\iota} \mapsto b_{\iota}$ extends to an algebra endomorphism of $U^-$. 

By Proposition 3.14 in \cite{Bo2}, for each $\iota \in I_\infty$ there exists a unique $\cplx(v)$-linear function $e_{\iota}' : U^- \rightarrow U^-$ characterized by the properties: $$(1) \ \ \ \ e_{\iota}'(yz) = e_\iota'(y)z + v^{(-\iota, |y|)}ye_\iota'(z) \ \ \ \ \forall y, z \in U^-$$ $$(2) \ \ \ \ e_\iota'(b_\kappa) = \delta_{\iota, \kappa} \ \ \ \ \forall \kappa \in I_\infty.$$  For a real vertex $j \in I^{re}$, let $\mathcal{K}_j = \ker(e_j')$, and for an imaginary vertex $i \in I^{im}$ let $\mathcal{K}_i = \cap_{l \geq 1} \ker(e_{(i, l)}').$  We then have the following proposition, which is Proposition 16 in \cite{Bo2}:

\begin{lemma}\label{decomp} For a real vertex $j \in I^{re}$ there is a direct sum decomposition $$U^- = \bigoplus_{l \geq 0} F_j^{(l)}\mathcal{K}_j$$ and for an imaginary vertex $i \in I_\infty$ there is a direct sum decomposition $$U^- = \bigoplus_{c \in \mathcal{C}_i} b_{i,c}\mathcal{K}_i.$$\end{lemma}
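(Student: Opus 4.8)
The plan is to establish each direct sum decomposition by combining the defining properties of the operators $e_\iota'$ with the structure of $U^-$ as a graded algebra, proceeding separately in the real and imaginary cases. First I would treat a real vertex $j \in I^{re}$. The twisted Leibniz rule $(1)$ together with $(2)$ shows that $e_j'$ acts on the generators as a ``skew derivation'' normalized by $e_j'(F_j) = 1$ and $e_j'(b_\kappa) = 0$ for $\kappa \neq j$; in particular $e_j'(F_j^{(l)}) = v^{?}F_j^{(l-1)}$ up to an explicit scalar coming from $(-j,|F_j^{(l-1)}|)$ and the $[l]$-denominator, so $e_j'$ strictly lowers the $F_j$-``order''. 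The key point is then to show that every $u \in U^-$ can be written uniquely as $\sum_{l \ge 0} F_j^{(l)} k_l$ with $k_l \in \mathcal{K}_j$. For existence I would induct on the degree $|u|$ (which is bounded in the $j$-direction), peeling off the top $F_j$-power: given $u$, repeatedly apply $e_j'$ and use the Leibniz rule to produce correction terms, or — more cleanly — invoke the fact that $e_j'$ together with left multiplication by $F_j$ generate an $\mathfrak{sl}_2$-type action on $U^-$ (this is the mechanism behind Kashiwara's original lemma), so $U^-$ decomposes into strings and the lowest-weight vectors of the strings are exactly $\mathcal{K}_j = \ker e_j'$. Uniqueness follows because $e_j'$ is injective on $\bigoplus_{l \ge 1} F_j^{(l)}\mathcal{K}_j$: applying $e_j'$ to $\sum_{l} F_j^{(l)}k_l$ and using $e_j'(k_l) = 0$ yields $\sum_{l \ge 1} (\text{scalar}_l) F_j^{(l-1)} k_l$, so by induction all $k_l$ vanish.

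For an imaginary vertex $i \in I^{im}$ the argument is parallel but one must handle the whole family $\{e_{(i,l)}'\}_{l \ge 1}$ and the combinatorics of $\mathcal{C}_i$ simultaneously. Here I would use that $\{b_{i,c} : c \in \mathcal{C}_{i,l}\}$ is a basis of $U^-[-li]$ (stated in \S\ref{2.2}) and that, by property $(2)$ and the Leibniz rule, $e_{(i,k)}'(b_{i,c})$ is a signed/weighted sum of the $b_{i,c'}$ obtained by deleting one part equal to $k$ from $c$ — so the operators $e_{(i,k)}'$ act on the span of the $b_{i,c}$ like (twisted) partial derivatives on a polynomial/free algebra in the ``variables'' $b_{i,k}$. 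The claim is that $U^-$ is freely generated as a left module over $\mathcal{K}_i = \bigcap_{l} \ker e_{(i,l)}'$ by the monomials $b_{i,c}$. Existence: given $u$, one shows by induction on $|u|$ that subtracting suitable left multiples $b_{i,c}\cdot(\text{something})$ with $|c| > 0$ lands in $\mathcal{K}_i$; the inductive step uses that if $u \notin \mathcal{K}_i$ then some $e_{(i,l)}'(u) \neq 0$ has strictly smaller degree, and one lifts a decomposition of $e_{(i,l)}'(u)$ back through the ``integration'' against $b_{i,l}$. Uniqueness: if $\sum_{c} b_{i,c}k_c = 0$ with $k_c \in \mathcal{K}_i$, apply the various $e_{(i,l)}'$ and use the derivation property plus $e_{(i,l)}'(k_c) = 0$ to peel off one $b_{i,l}$ at a time, reducing to a shorter relation; the base case $c = \emptyset$ gives $k_\emptyset = 0$.

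The main obstacle I anticipate is the bookkeeping of the twisting factors $v^{(-\iota,|y|)}$ and the scalars $v^{(-(i,k),\cdot)}$ appearing when $e_{(i,k)}'$ skips past a block $b_{i,c'}$ in the imaginary case — one needs these scalars to be nonzero (so that the triangular system can be inverted), which should follow because they are monomials in $v$, but verifying that the ``deletion'' operators $e_{(i,k)}'$ assemble into an invertible change of filtration requires care, especially coordinating the non-isotropic (compositions, so a free algebra) versus isotropic (partitions, so a polynomial algebra) combinatorics uniformly. A clean way to organize this is to filter $U^-$ by the total degree in the imaginary direction and to check that the associated graded identifies $U^-$ with $(\text{free or polynomial algebra on } b_{i,k}, k \ge 1) \otimes \mathcal{K}_i$, with the $e_{(i,l)}'$ inducing the standard derivations on the first factor; the decomposition then follows from the corresponding fact for a free/polynomial algebra. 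Since this lemma is quoted directly from \cite{Bo2} (Proposition 16 there), I would present this as a sketch and refer to \cite{Bo2} for the twisting-factor details.
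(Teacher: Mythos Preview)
The paper does not actually prove this lemma; it merely records it as Proposition 16 of \cite{Bo2} and cites that reference. Your proposal ultimately does the same thing---your final paragraph explicitly defers to \cite{Bo2} for the details---so the two are aligned, and the sketch you give of the $\mathfrak{sl}_2$-string argument in the real case and the skew-derivation/filtration argument in the imaginary case is a reasonable outline of how that proposition is established in Bozec's paper.
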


We can now define the Kashiwara operators $\tilde{e}_\iota, \tilde{f}_\iota : U^- \rightarrow U^-$ for $\iota \in I_\infty$ as in Definition 3.17 of \cite{Bo2}.  First suppose $j \in I^{re}$ is a real vertex.  For $u \in U^-$, by Lemma \ref{decomp}, we can write uniquely $u = \sum_{l \geq 0} F_j^{(l)}z_l$ with $z_l \in \ker{e_j'}$.  The Kashiwara operators $\tilde{e}_j, \tilde{f}_j$ are then defined by $$\tilde{f}_j(u) = \sum_{l \geq 0} F_j^{(l + 1)}z_l \ \ \ \ \ \tilde{e}_j(u) = \sum_{l \geq 1} F_j^{(l - 1)}z_l.$$  Next, suppose $i \in I^{im}$ is an imaginary vertex.  Given $u \in U^-$, write $u = \sum_{c \in \mathcal{C}_i} b_{i,c} z_c$ with $z_c \in \mathcal{K}_i$ for all $c \in \mathcal{C}_i$ as in Lemma \ref{decomp}.  Then we define $$\tilde{f}_{i, l}(u) := \begin{cases} 
      \displaystyle\sum_{c \in \mathcal{C}_i} b_{i, (l, c)}z_c & i \notin I^{iso} \\
      \displaystyle\sum_{\lambda \in \mathcal{C}_i} \sqrt{\frac{l}{m_l(\lambda) + 1}}b_{i, \lambda \cup l}z_\lambda & i \in I^{iso} 
   \end{cases}$$ and $$\tilde{e}_{i, l}(u) := \begin{cases} 
      \displaystyle\sum_{c \in \mathcal{C}_i: c_1 = l} b_{i, c\backslash c_1}z_c & i \notin I^{iso} \\
      \displaystyle\sum_{\lambda \in \mathcal{C}_i : l \in \lambda} \sqrt{\frac{m_l(\lambda)}{l}}b_{i, \lambda \backslash l}z_\lambda & i \in I^{iso} 
   \end{cases}$$ where if $c = (c_1, c_2, \ldots)$ is a composition then $c\backslash c_1 = (c_2, c_3, \ldots)$ and $(l, c) = (l, c_1, c_2, \ldots)$, and if $\lambda = (\lambda_1, \lambda_2, \ldots)$ is a partition then $m_l(\lambda)$ is the number of parts of $\lambda$ equal to $l$, $\lambda \backslash l$ denotes the partition obtained from $\lambda$ by removing a part of size $l$, and $\lambda \cup l$ denotes the partition obtained from $\lambda$ by adding a part of size $l$.
   
   Let $\mathcal{A} \subset \cplx(v)$ denote the subring consisting of rational functions in $v$ without pole at $v^{-1} = 0$, in other words the localization of $\cplx[v^{-1}]$ at the maximal ideal $(v^{-1})$.  Let $\mathcal{L}(\infty)$ denote the sub-$\mathcal{A}$-module of $U^-$ spanned by the elements $\tilde{f}_{\iota_1} \cdots \tilde{f}_{\iota_s}.1$ where $s \geq 0$ and $\iota_k \in I_\infty$ for $1 \leq k \leq s$.  Finally, we define the set $$\mathcal{B}(\infty) := \{\tilde{f}_{\iota_1} \cdots \tilde{f}_{\iota_s}.1 : \iota_k \in I_\infty\} \subset \frac{\mathcal{L}(\infty)}{v^{-1}\mathcal{L}(\infty)}.$$  We have the following theorem, which is Theorem 3.26 of \cite{Bo2}:
   
 \begin{theorem} The Kashiwara operators $\tilde{e}_\iota, \tilde{f}_\iota$ for $\iota \in I_\infty$ are still defined on $\mathcal{B}(\infty)$, and there are functions $\wt: \mathcal{B}(\infty) \rightarrow \ints I$ and $\epsilon_i : \mathcal{B}(\infty) \rightarrow \mathcal{C}_i \cup \{-\infty\}$ such that $B(\infty)$ together with these maps forms a generalized $Q$-crystal in the sense of Definition 3.18 of \cite{Bo2}.\end{theorem}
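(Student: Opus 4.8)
The plan is to follow Bozec's proof in \cite{Bo2}, which adapts to the present setting Kashiwara's ``grand loop'' induction for the crystal $\B$ of an ordinary quantum group. None of the assertions — stability of the lattice $\L$ under all $\e_\iota$ and $\f_\iota$; the fact that they descend to well-defined maps on $\B \subset \L/v^{-1}\L$; that $\B$ is a $\cplx$-basis of $\L/v^{-1}\L$; and that the resulting data $(\wt, \epsilon)$ satisfy the axioms of Definition 3.18 of \cite{Bo2} — can be proved in isolation. Instead one bundles all of them into a single statement $P(d)$ indexed by the $\ints I$-grading $d \in \ints I$ (in practice, by the height $|d| = \sum_i |d_i|$) and proves, by induction, that $P(d')$ for all $d'$ of smaller height implies $P(d)$.

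For the inductive step the central structural input is Lemma \ref{decomp}. Fixing $\iota \in I_\infty$, every homogeneous $u \in U^-[d]$ has a unique expansion along the $\iota$-string — $u = \sum_l F_j^{(l)}z_l$ with $z_l \in \mathcal{K}_j$ if $\iota = j$ is real, and $u = \sum_{c} b_{i,c}z_c$ with $z_c \in \mathcal{K}_i$ if $\iota$ comes from an imaginary vertex $i$ — and $\e_\iota, \f_\iota$ act by shifting the string index while fixing the ``tail'' $z_\bullet$, which lies in lower-height homogeneous components. Using the defining properties $(1)$ and $(2)$ of $e_\iota'$ together with the induction hypothesis applied to the tails, one shows $\e_\iota \L \subseteq \L$ and $\f_\iota \L \subseteq \L$, that $\f_\iota$ induces a well-defined injection $\B \cup \{0\} \to \B \cup \{0\}$ with $\e_\iota\f_\iota = \id$, and that $\f_\iota\e_\iota$ is the identity on the part of $\B$ not annihilated by $\e_\iota$. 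The genuinely global ingredient is the compatibility between two distinct distinguished vertices $\iota \ne \kappa$: one must show the two decompositions of Lemma \ref{decomp}, for $\iota$ and for $\kappa$, are mutually compatible enough that $\f_\iota$ and $\e_\kappa$ ``almost commute'' in the way the crystal axioms demand. For a pair of real vertices this is the classical rank $2$ computation controlled by the Serre relation $\sum_{t+t'=1-(j,\iota)}(-1)^t F_j^{(t)}F_\iota F_j^{(t')} = 0$; when one vertex is imaginary one uses instead the commutation relations $[F_\iota, F_\kappa] = 0$ in the degrees where $(\iota,\kappa)=0$, plus the relevant Serre relations.

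Once the loop closes, the crystal data read off directly. Set $\wt(b)$ equal to the $\ints I$-degree of any homogeneous representative of $b$, so that $\wt(\f_\iota b) = \wt(b) + \deg F_\iota$; for a real vertex $j$ recover the usual $\epsilon_j(b) = \max\{n : \e_j^n b \ne 0\}$, and for an imaginary vertex $i$ let $\epsilon_i(b) \in \mathcal{C}_i$ be the composition (partition) one peels off by pushing $b$ to the top of its $i$-string under the operators $\e_{i,l}$. Verifying that $(\B, \wt, \epsilon, \e_\iota, \f_\iota)$ satisfies Definition 3.18 of \cite{Bo2} — the weight relation above, $\e_\iota\f_\iota = \id$, and the compatibilities linking $\epsilon$, $\wt$ and the operators — is then immediate from what the loop has established.

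I expect the main obstacle to be the imaginary-vertex bookkeeping inside the grand loop. Unlike the real case, where the relevant rank $1$ object is essentially a polynomial ring in a single divided-power generator, at an imaginary vertex $i$ one works with the free algebra $\cplx(v)\langle F_{i,l} : l \ge 1\rangle$ — with $b_{i,c}$ ranging over compositions of each level, or over partitions in the isotropic case — the operators $e_{(i,l)}'$ for different $l$ interact through property $(1)$, and in the isotropic case $\f_{i,l}, \e_{i,l}$ carry the normalization scalars $\sqrt{l/(m_l(\lambda)+1)}$ and $\sqrt{m_l(\lambda)/l}$. Controlling the cross-terms that $e_{(i,l)}'$ produces while preserving integrality over $\mathcal{A}$, and checking that these scalars (which lie in $\cplx^\times \subset \mathcal{A}^\times$) cause no trouble modulo $v^{-1}\L$, is the technical heart of the induction and the point at which the argument departs most from Kashiwara's original — it requires the explicit $v$-binomial identities governing how divided powers interact across the string expansions.
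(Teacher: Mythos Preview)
The paper does not prove this theorem at all: it is quoted verbatim as ``Theorem 3.26 of \cite{Bo2}'' in the background section and used as a black box. So there is no ``paper's own proof'' to compare your proposal against. Your sketch of the grand-loop induction is a plausible outline of how Bozec establishes the result in \cite{Bo2}, but for the purposes of this paper no argument is required beyond the citation.
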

 
Note that the Kashiwara operators $\tilde{f}_{\iota}, \tilde{e}_\iota : U^- \rightarrow U^-$ for $\iota \in I_\infty$ are graded operators of degrees $-j$ and $+j$, respectively, for $\iota = j \in I^{re}$ and are graded operators of degrees $-li$ and $+li$, respectively, for $\iota = (i, l)$ with $i \in I^{im}$ and $l \geq 1$.  In particular, $\mathcal{L}(\infty)$ and hence $\mathcal{B}(\infty)$ inherit $\ints I$-gradings as well, and the Kashiwara operators are graded operators of the same degrees as on $U^-$.  In the special cases of the comet quivers $Q(\omega, r)$ defined in the introduction, we describe the $\ints I$-graded set $B(\infty)$ and the Kashiwara operators defined on it explicitly in terms of sequences with special properties.

\section{Relations in $\B$ Corresponding to Non-isotropic Comet Quivers}\label{algebra}

	Recall from the introduction that $Q(\omega, r)$ is the quiver with vertex set $I = \{ i,j_1, \ldots, j_r\}$ with $\omega$ loops at the imaginary vertex $i$ and exactly 1 edge pairwise connecting $i$ and real vertices $j_1, \ldots, j_r$. Writing $j$ without a subscript refers to any real vertex. In this paper we deal with only the non-isotropic case ($\omega > 1$). To state our main theorem, we first define \emph{crystal Serre relations} among elements $\f_\iota$ and $\f_{j_k}$.

\begin{definition}[Crystal Serre Relation]	
	For $\iota \in I_\infty$ and a real vertex $j \in I^{re}$, say that $\f_\iota$ satisfies the $l$-th order \emph{crystal Serre relation} with $\f_j$ if $\f_j \f_\iota  \f_j^l \equiv \f_\iota \f_j^{l+1}$ as operators on $\B$.

%	For $\iota = (i, l) \in I_\infty$ and $k \in \L$, $\f_j \f_\iota \f_j^l.k  \equiv \f_\iota \f_j^{l+1}.k$ in $\B$, which we refer to as the \emph{crystal Serre relation}. 
		
\end{definition}

\begin{definition}[Commutation Relation]
	
	For $\iota, \iota' \in I_\infty$, say that that $\f_\iota$ and $\f_{\iota'}$ satisfy the \emph{commutation relation} if $\f_\iota \f_{\iota'} \equiv \f_{\iota'} \f_\iota$ as operators on $\B$. 

%	For $j, j'$ 2 real vertices, say that $\f_j$ and $\f_{j'}$ satisfy the \emph{commutation relation} if $\f_j \f_{j'} = \f_{j'} \f_j$. 
\end{definition}

\begin{theorem}[Main Theorem] \label{maintheorem}
	
	In the generalized crystal $\B$ associated to the quiver $Q(\omega, r)$ for $\omega > 1$ and $r \geq 0$, the operator $\f_{(i, l)}$ satisfies the $l$-th order crystal Serre relation with the operators $\f_{j_k}$ for all $1 \leq k \leq r$, and the operators $\f_j$ and $\f_{j'}$ commute for any $j, j' \in \{j_1, ..., j_r\}$.  Furthermore, every equality $\f_{\iota_1} \ldots \f_{\iota_n}.1 \equiv \f_{\iota'_1} \ldots \f_{\iota'_n}.1$ in $\B$ follows from the commutation relations and crystal Serre relations. 

\end{theorem}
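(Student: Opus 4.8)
The plan is to describe $\B$ explicitly as a set of combinatorial objects (sequences) with the Kashiwara operators $\f_\iota$ acting by concrete insertion rules, and then verify the theorem in two halves: first that the stated relations hold, and second that they suffice. For the comet quiver $Q(\omega, r)$ with $\omega > 1$, the imaginary vertex $i$ is non-isotropic, so $\mathcal{C}_i$ consists of \emph{compositions}; I would model an element of $\B$ as (roughly) a word recording the sequence of $\f$-operators applied to $1$, subject to a normal form. The key structural input is Lemma \ref{decomp}: applied at the real vertex $j_k$ it gives $U^- = \bigoplus_{l \ge 0} F_{j_k}^{(l)}\mathcal{K}_{j_k}$, and applied at $i$ it gives $U^- = \bigoplus_{c \in \mathcal{C}_i} b_{i,c}\mathcal{K}_i$. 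Iterating these decompositions and passing to $\mathcal{L}(\infty)/v^{-1}\mathcal{L}(\infty)$ should yield a canonical parametrization of $\B$ by tuples $(c; l_1, \ldots, l_r)$ together with recursively-defined "tail" data, with $\wt$ and $\epsilon$ read off directly. I would set this up carefully as a preliminary proposition before proving the main theorem, since essentially everything reduces to combinatorics on these parameters.

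For the first half — that $\f_{(i,l)}$ satisfies the $l$-th order crystal Serre relation with each $\f_{j_k}$, and that the $\f_{j_k}$ pairwise commute — I would compute both sides directly on the normal form. The pairwise commutation of $\f_{j_k}$ and $\f_{j_{k'}}$ for $k \ne k'$ should be immediate from the fact that $j_k$ and $j_{k'}$ are not connected by an edge (so $(j_k, j_{k'}) = 0$), which forces the two real-vertex decompositions to be "independent"; this is the crystal-level shadow of the commutation relation $[F_{j_k}, F_{j_{k'}}] = 0$ in $U^-$. For the crystal Serre relation, note $(j_k, (i,l)) = l(j_k, i) = -l$ since there is exactly one edge between $i$ and $j_k$, so $1 - (j_k,(i,l)) = 1 + l$; the classical Serre relation is $\sum_{t+t'=1+l}(-1)^t F_{j_k}^{(t)}F_{(i,l)}F_{j_k}^{(t')} = 0$, and the assertion $\f_{j_k}\f_{(i,l)}\f_{j_k}^l \equiv \f_{(i,l)}\f_{j_k}^{l+1}$ is its "leading-order" degeneration mod $v^{-1}$. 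I expect this to follow by a direct check that both operators send the normal form of $u$ to the same normal form, using the explicit formulas for $\f_{i,l}$ and $\f_{j}$ from Section \ref{2.2} together with the interaction of $b_{i,l}$ with $\mathcal{K}_{j_k}$.

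The main obstacle is the second half: showing the relations are \emph{complete}, i.e. any identity $\f_{\iota_1}\cdots\f_{\iota_n}.1 \equiv \f_{\iota'_1}\cdots\f_{\iota'_n}.1$ follows from commutation and crystal Serre relations. The strategy is to exhibit a \emph{normal form for words}: show that using only the allowed relations, any word $\f_{\iota_1}\cdots\f_{\iota_n}$ can be rewritten so that all $\f_{(i,l)}$'s are moved to the left (or into a canonical block structure) and the remaining $\f_{j_k}$'s are sorted by index $k$, with the exponents determined. Concretely, I would (i) use the crystal Serre relation repeatedly to push real operators $\f_{j_k}$ past imaginary operators $\f_{(i,l)}$ at the cost of adjusting powers, and use commutation to sort the $\f_{j_k}$ among themselves, arriving at a canonical word $w$; (ii) prove that distinct canonical words act differently on $1$ — equivalently, that the parametrization of $\B$ by canonical words is a bijection onto $\B$ — by comparing against the explicit model of $\B$ from the preliminary proposition and checking that the map "canonical word $\mapsto$ its normal-form parameters" is injective. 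Step (ii) is where the real work lies: it amounts to showing the explicit crystal has no "extra" coincidences beyond those forced, which I would establish by induction on $n$ (or on weight), peeling off the last operator and invoking the direct-sum decompositions to separate cases by which $\epsilon_i$ or real-vertex string length changes. A Bergman diamond-lemma / confluence argument for the rewriting system in step (i) would make the "follows from" claim precise, and I would need to check the single nontrivial overlap ambiguity (an $\f_{j_k}$ with an $\f_{(i,l)}$ and another $\f_{j_{k'}}$, or two copies of the Serre move) resolves — this confluence check is the technical heart.
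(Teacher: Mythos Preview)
Your proposal has a circularity that undermines both halves. You plan to first build an explicit combinatorial model of $\B$ by iterating the decompositions of Lemma~\ref{decomp}, then verify the relations on that model and use the model again to check injectivity of your normal form. But Lemma~\ref{decomp} decomposes $U^-$ at a \emph{single} vertex; iterating it across vertices and passing to $\L/v^{-1}\L$ requires knowing exactly how $\mathcal{K}_{j_k}$, $\mathcal{K}_{j_{k'}}$, $\mathcal{K}_i$ interact and which coefficients fall into $v^{-1}\mathcal{A}$ --- and that is precisely the content of the crystal Serre and commutation relations you are trying to prove. The model cannot be built before the theorem, so neither your ``direct check on the normal form'' for the Serre relation nor your step~(ii) injectivity check has anything to stand on.

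The paper proceeds in the opposite order and supplies the two ingredients your sketch is missing. For the relations, it works directly in $U^-$: it analyzes the expansion $b_{i,l}F_j^{(c)} = \sum_k F_j^{(k)} z_{k,c}$ via an explicit recursion (Lemmas~\ref{recursion-lemma}--\ref{inLinfty}), proves two nontrivial $q$-binomial identities (Lemmas~\ref{triple-binom} and~\ref{u_{n,t}sum}), and from these shows by direct computation that $\f_{i,l}\f_j^{l+1+n}.1 \equiv \f_j^{n+1}\f_{i,l}\f_j^l.1$ modulo $v^{-1}\L$ (Lemma~\ref{basicequality}), then bootstraps to arbitrary $k\in\L$. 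These identities are the real work, and you have not budgeted for them; the crystal Serre relation is not a ``leading-order degeneration'' one can simply read off. For sufficiency, the paper does not use confluence or a diamond lemma: it defines a normal form (\emph{steep sequences}), observes that the proved relations rewrite any word to a steep one (Lemma~\ref{steep-transform}), and then shows that the number of steep sequences of degree $\mu$ equals $\#\B[\mu]$ by matching a recursion extracted from Bozec's character formula for $\dim U^-[\mu]$ (Lemma~\ref{number-steep}). This dimension count is what replaces your step~(ii), and it is the missing idea in your plan --- without an independent handle on $\#\B[\mu]$, no amount of confluence checking will establish that distinct normal forms give distinct elements of $\B$.
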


In this section we show that the relations in the theorem hold, and in Section \ref{combo_charform} we will show by a combinatorial argument with Bozec's character formula for $U^-$ (given in \cite{addendum}) that the these relations imply all equalities in $\mathcal{B}(\infty)$ among compositions of Kashiwara operators applied to $1$, giving a complete description of $\B$. 

% \subsection{Higher Serre Relations}  \label{3.1}

Recall from Section \ref{U-} the Serre relation, which states for all $\iota \in I_\infty$, $\iota \neq j$,

\begin{equation}\label{Serre-Relation}\sum_{t + t' = 1 - (j, \iota)} (-1)^tF_j^{(t)}F_{\iota}F_j^{(t')} = 0.\end{equation}

We define the notion of an \emph{a}-th order Serre relation for any element $x \in U^-$. 

\begin{definition}

	An element $x \in U^-$ satisfies the \emph{$a$-th order Serre relation with $F_j$} if 
	
	$$\sum_{t=0}^{a+1} (-1)^{a+1-t}F_j^{(a+1-t)}xF_j^{(t)} = 0.$$
	
\end{definition}

\begin{lemma}\label{moving-Fjs}
	Let $x \in U^-$ satisfy the $l$-th order Serre relation with $F_j$. Then for $n \in \mathbb{N}$, 

	$$xF_j^{(l+1+n)} = \sum_{t=0}^{l} (-1)^{l+t} \binom{l+n-t}{n} F_j^{(l+1+n-t)} x F_j^{(t)}.$$
\end{lemma}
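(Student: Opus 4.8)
The plan is to induct on $n$. The base case $n = 0$ is exactly the $l$-th order Serre relation for $x$: solving $\sum_{t=0}^{l+1} (-1)^{l+1-t} F_j^{(l+1-t)} x F_j^{(t)} = 0$ for the $t = l+1$ term gives $x F_j^{(l+1)} = \sum_{t=0}^{l} (-1)^{l+t} F_j^{(l+1-t)} x F_j^{(t)}$, which matches the claimed formula since $\binom{l-t}{0} = 1$. For the inductive step, assume the formula holds for some $n \geq 0$ and multiply both sides on the right by $F_j$. On the left, $F_j^{(l+1+n)} F_j = [l+2+n] F_j^{(l+2+n)}$, so after dividing by $[l+2+n]$ we must show the right-hand side, multiplied by $F_j$ and divided by $[l+2+n]$, collapses to the $(n+1)$-th formula. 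The term $x F_j^{(t)} F_j = [t+1] x F_j^{(t+1)}$ for $t \leq l-1$ stays within range, but the $t = l$ term produces $x F_j^{(l+1)}$, to which we re-apply the Serre relation (the $n=0$ case) to re-expand it in terms of $F_j^{(l+1-s)} x F_j^{(s)}$ with $s \leq l$.

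Carrying this out, after collecting coefficients, the task reduces to a purely combinatorial identity among $v$-binomial coefficients: one must check that the coefficient of $F_j^{(l+2+n-t)} x F_j^{(t)}$ on both sides agree for each $t$ with $0 \leq t \leq l$. The coefficient on the right will be a sum of two contributions—one from shifting the index in the inductive hypothesis (the term that was $F_j^{(l+1+n-(t-1))} x F_j^{(t-1)}$ multiplied by $[t] / [l+2+n]$) and one from the re-expansion of the $t' = l$ term via the Serre relation—and this must equal $(-1)^{l+t}\binom{l+1+n-t}{n+1}$. Using the standard $v$-Pascal recursions $\binom{a}{b} = v^{-b}\binom{a-1}{b-1} + v^{a-b}\binom{a-1}{b}$ (or its mirror), together with the identities relating $[l+1+n-t]$, $[t]$, and $[l+2+n]$ to ratios of $v$-binomials, the two contributions should telescope into the desired single binomial coefficient.

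I expect the main obstacle to be bookkeeping in this $v$-binomial identity: the divided-power normalizations introduce factors of $[m]$ in several places, and one must be careful that the powers of $v$ implicit in the $v$-analogues all cancel so that the final coefficients are honest $v$-binomials with no residual monomial factor. A cleaner alternative, which I would try first, is to avoid induction entirely: apply the $l$-th order Serre relation repeatedly as a rewriting rule, observing that $x F_j^{(m)}$ for $m \geq l+1$ can always be pushed to a combination of terms $F_j^{(m')} x F_j^{(t)}$ with $t \leq l$, and track the multiplicity with which each such term arises as a lattice-path count—this should directly produce the binomial coefficient $\binom{l+n-t}{n}$ as a number of monotone paths, making the combinatorics transparent. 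Either way, the algebraic content is entirely the Serre relation for $x$ plus manipulation of divided powers in $\cplx(v)\langle F_j \rangle$, so no new input about $U^-$ or the crystal structure is needed.
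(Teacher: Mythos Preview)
Your approach is correct and essentially identical to the paper's: induct on $n$, multiply the inductive hypothesis on the right by $F_j$, re-expand the overshooting $t=l$ term via the base-case Serre relation, and match coefficients of $F_j^{(l+1+n-t)} x F_j^{(t)}$. The paper carries out exactly this computation and reduces the resulting $v$-binomial identity to the elementary quantum-number relation $[l+1+n][l+1-t] + [t][n] = [l+1][l+1+n-t]$, which you may find tidier than invoking $v$-Pascal directly; your lattice-path alternative is not pursued in the paper.
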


\begin{proof}
	
	We induct on $n$. When $n=0$, $(-1)^{l+t} \binom{l+n-t}{n} = (-1)^{l+t}$, so the above is exactly the $l$-th order Serre relation of $x$ with $F_j$.
	
	Now take $n>0$. By induction on $n$ we have
	$$xF_j^{(l+1+n)}  = \frac{1}{[l+1+n]} xF_j^{(l+n)} F_j $$
	$$= \frac{1}{[l+1+n]} \sum_{t=0}^{l} (-1)^{t+l} \binom{l+n-1-t}{n-1} F_j^{(l+n-t)} x F_j^{(t)} F_j$$
	
	$$ = \frac{1}{[l+1+n]} \left( \sum_{t=0}^{l-1} [t+1] (-1)^{t+l} \binom{l+n-1-t}{n-1} F_j^{(l+n-t)}x F_j^{(t+1)} + [l+1] \binom{n-1}{n-1}F_j^{(n)}  xF_j^{(l+1)} \right)$$
	
	$$ =  \frac{1}{[l+1+n]} \left( \sum_{t=0}^{l-1} [t+1] (-1)^{t+l} \binom{l+n-1-t}{n-1} F_j^{(l+n-t)}x F_j^{(t+1)} + [l+1] F_j^{(n)} \sum_{t=0}^{l} (-1)^{l-t} F_j^{(l+1-t)}x F_j^{(t)} \right).$$
	
	To prove the lemma, it suffices to show that
	
	$$\frac{[t]}{[l+1+n]} (-1)^{t-1+l} \binom{l+n-t}{n-1}+ (-1)^{l-t} \frac{[l+1]}{[l+1+n]} \binom{l+1+n-t}{n} = (-1)^{t+l} \binom{l+n-t}{n}.$$
	
	Simple algebraic manipulation reveals this equality is equivalent to proving
	
	%$$[l+1+n] \binom{l+n-t}{n} + [t] \binom{l+n-t}{n-1} = [l+1] \binom{l+1+n-t}{n}.$$
	
%	$$\iff [l+1+n][l+1-t] + [t][n] = [l+1][l+1+n-t],$$
	
	$$ [l+1+n][l+1-t] + [t][n] = [l+1][l+1+n-t],$$
	
	which can readily be checked using the definition of quantum numbers.
\end{proof}

%We prove a quantum binomial identity that is later relevant. 

\begin{lemma}\label{triple-binom}
	For $a,b,c \geq 0$,
	
	\begin{equation}\label{triple-identity}\sum_{s=0}^{c} (-1)^{s} \binom{b+s}{s}  \binom{a+s}{c} \binom{b+c+1}{c-s} = \binom{a-b-1}{c}.\end{equation}
	
\end{lemma}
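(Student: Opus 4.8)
The plan is to deduce Lemma~\ref{triple-binom} from the $q$-Pfaff--Saalsch\"utz summation theorem after recognizing the left-hand side of \eqref{triple-identity} as a terminating, balanced basic hypergeometric series. Set $q=v^2$, so that $[n]=q^{-(n-1)/2}(1-q^n)/(1-q)$ and each Gaussian binomial $\binom{m}{k}$ is a fixed power of $v$ times $(q^{m-k+1};q)_k/(q;q)_k$, the latter expression also serving as the definition when $m<k$ or $m<0$. Writing $t_s$ for the $s$-th summand, a direct computation of the ratio of consecutive terms gives, after all powers of $q$ are collected,
\[
\frac{t_{s+1}}{t_s}=q\cdot\frac{(1-q^{b+1}q^{s})(1-q^{a+1}q^{s})(1-q^{-c}q^{s})}{(1-q^{s+1})(1-q^{a+1-c}q^{s})(1-q^{b+2}q^{s})},
\]
so the sum equals $t_0\cdot{}_3\phi_2$, the series with upper parameters $q^{b+1},q^{a+1},q^{-c}$, lower parameters $q^{a+1-c},q^{b+2}$, and argument $q$; here $t_0=\binom{a}{c}\binom{b+c+1}{c}$ is the $s=0$ term. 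The parameter $q^{-c}$ forces termination at $s=c$, and the relation $q\cdot q^{b+1}q^{a+1}q^{-c}=q^{a+1-c}q^{b+2}$ shows the series is Saalsch\"utzian.

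I would then apply $q$-Pfaff--Saalsch\"utz with $n=c$, terminating parameter $q^{-c}$, and remaining upper parameters $q^{b+1},q^{a+1}$; its hypothesis is precisely the balancing relation above, and the theorem yields
\[
{}_3\phi_2=\frac{(q^{a-b-c};q)_c\,(q^{-c};q)_c}{(q^{a+1-c};q)_c\,(q^{-b-c-1};q)_c}.
\]
It remains to verify the purely formal identity
\[
\binom{a}{c}\binom{b+c+1}{c}\cdot\frac{(q^{a-b-c};q)_c\,(q^{-c};q)_c}{(q^{a+1-c};q)_c\,(q^{-b-c-1};q)_c}=\binom{a-b-1}{c},
\]
which I would do by writing every Gaussian binomial and every Pochhammer symbol as a power of $v$ times a product of factors $1-q^{\bullet}$, using $1-q^{-m}=-q^{-m}(1-q^{m})$ to flip the symbols with negative exponents, and checking that the two sides then agree factor for factor while the leftover power of $v$ is forced to $1$ by the balancing. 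The edge cases where $a-b-1<0$, or where some left-hand summands vanish termwise because $a+s<c$, are handled in the same step, using that termination---hence applicability of $q$-Pfaff--Saalsch\"utz---is supplied by the parameter $q^{-c}$ regardless of the sign of $a-b-1$.

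The main obstacle is entirely one of bookkeeping: although \eqref{triple-identity} displays no powers of $v$, every intermediate quantity carries such powers, and verifying their complete cancellation---together with pinning down the $q$-Pfaff--Saalsch\"utz parameters correctly, where a single sign or exponent slip propagates through everything---is the only genuinely error-prone part. If a self-contained argument were preferred, one could instead expand $\binom{a+s}{c}$ by $q$-Vandermonde as $\sum_k q^{\ast}\binom{a}{c-k}\binom{s}{k}$, interchange the order of summation, and equate coefficients of $\binom{a}{c-k}$; this reduces \eqref{triple-identity}, for each $k$, to an identity not involving $a$ that expresses $\binom{b+k}{k}$, up to sign and a power of $q$, as $\sum_s(-1)^s q^{\ast}\binom{b+s}{s}\binom{s}{k}\binom{b+c+1}{c-s}$, which can be proved by induction on $c$ using the Gaussian Pascal recurrence.
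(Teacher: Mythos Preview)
Your approach is correct and genuinely different from the paper's. You recognize the left-hand side of \eqref{triple-identity} as $t_0$ times a terminating balanced ${}_3\phi_2$ and invoke the $q$-Pfaff--Saalsch\"utz summation; the paper instead passes to $q$-binomials, treats both sides as polynomials in the formal variables $q^a,q^b$, observes that $\{\binom{a+s}{c}_q : 0\le s\le c\}$ is a basis of $\mathbb{Q}(q)[q^a]_{\le c}$, expands the right-hand side in this basis, and pins down each coefficient $x_s$ by evaluating at $b\in\{-1,\dots,-c-1\}$. Your route is much shorter if one is willing to quote $q$-Pfaff--Saalsch\"utz as a black box; the paper's interpolation argument is fully self-contained and, being a polynomial identity in $q^a,q^b$ from the outset, handles all integer specializations uniformly without any limit or edge-case analysis.

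One small caveat: your reduction to $t_0\cdot{}_3\phi_2$ tacitly assumes $t_0\ne 0$, i.e.\ $a\ge c$. When $0\le a<c$ the lower parameter $q^{a+1-c}$ renders the ${}_3\phi_2$ formally singular (the denominators $(q^{a+1-c};q)_s$ vanish for $s\ge c-a$), so your sentence about edge cases being ``handled in the same step'' by termination at $q^{-c}$ does not quite cover this; termination controls the numerator, not the offending denominator. The standard fix---prove the identity for generic $q^a$ and specialize, or perturb $a$ and take a limit---is routine, and in fact your alternative $q$-Vandermonde route (as well as the paper's polynomial-identity framing) avoids the issue entirely.
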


\begin{proof}
	
	To prove this identity, we transform Equation \eqref{triple-identity} to the $q$-analogue of the natural numbers defined for $n \geq 0$ by
	$$[n]_q = \frac{1-q^n}{1-q}.$$
	
	The $q$-analogue of $n!$ is defined as $[n]_q! := [n]_q\cdots [1]_q$ and the $q$-analogue of the binomial coefficient is defined by
	
	$$\binom{n}{k}_q = \frac{[n]_q \ldots [n-k+1]_q}{[k]_q!}.$$
	
	For $q = v^2$ we have $[n]_v = q^{\frac{1-n}{2}}[n]_q$, so Equation \eqref{triple-identity} becomes equivalent to showing
	
	\begin{equation}\label{qtripsum}\sum_{s=0}^{c} (-1)^{s} q^{\frac{s^s+s}{2} - cs}  \binom{b+s}{s}_q  \binom{a+s}{c}_q \binom{b+c+1}{c-s}_q = q^{bc+c}\binom{a-b-1}{c}_q.\end{equation}

	To prove Equation \eqref{qtripsum}, we view both sides as polynomials in the ring $Q(a,b) := \mathbb{Q}(q)[q^a, q^b]$,
	%nonstandadr notion for the ring? 
	 where $q^a$ and $q^b$ are formal variables. Viewing $\binom{a+s}{c}_q$ as a polynomial in the variable $q^a$, we claim that for $0 \leq s \leq c$ the polynomials $\binom{a+s}{c}$ form a basis for $\mathbb{Q}(q)[q^a]_{\leq c}$, the subspace of $\mathbb{Q}(q)$ of polynomials of degree at most $c$. Assume there is some non-trivial relation between $\binom{a+s}{c}_q$, such that $\sum_{s=0}^c y_s \binom{a+s}{c}_q = 0$. Evaluating the $q$-binomial coefficient $\binom{a+s}{c}_q$ at $a < c-s$ gives 0 and at $a=c-s$ gives 1. It follows that the $\binom{a+s}{c}_q$ form a basis of the free $\mathbb{Q}(q)[q^b]$-module $\mathbb{Q}(q)[q^b] [q^a]_{\leq c}$ (the $\leq c$ referring to the $a$ degree). In particular, there exist $x_s \in \mathbb{Q}(q)[q^b]_{\leq c}$ such that 
	 
	 %  For $a=0$, the LHS of the relation reduces to $x_c =0$. Using induction on $c$, we conclude that all $x_s =0$. Thus the polynomials $\binom{a+s}{c}_q$ for $0 \leq s \leq c$ are linearly independent and form a basis. 
	
%	Given that the polynomials $\binom{a+s}{c}_q$ form a basis, there exists $x_s \in \mathbb{Q}(q)[q^b]_{\leq c}$ such that
	
	\begin{equation}\label{xs}\sum_{s=0}^c x_s \binom{a+s}{c} = q^{bc+c}\binom{a-b-1}{c}.\end{equation}
	
	For a particular fixed value of $s$, we see for $b \in \{-c-1, \ldots, -1 \}$, $x_s$ evaluated at $b \neq -s-1$ gives 0, and $x_s$ evaluated at $b=-s-1$ gives $q^{bc+c}$. Thus $(q^b - q^{-c-1})(q^b - q^{-c})\cdots(q^b - q^{-1})/(q^b - q^{-s-1})$ divides $x_s$. Multiplying through by the appropriate constants in $\mathbb{Q}(q)$, we see that $$x_s = K_s \binom{b+s}{s} \binom{b+c+1}{c-s},$$ where $K_s$ is some constant in $\mathbb{Q}(q)$. Evaluating $x_s$ at $b = -s-1$ gives that $x_s(q^{-s-1}) = q^{(-s-1)c+c} = \binom{-1}{s}_q K_s$. Solving gives $K_s = (-1)^s q^{\frac{s^2+s}{2} - cs}$. 
	
	Thus $$x_s = q^{\frac{s^s+s}{2} - cs}  \binom{b+s}{s}_q  \binom{b+c+1}{c-s}_q,$$ and the identity in Equation \eqref{qtripsum} and the lemma are proven.
\end{proof}

Having proven Lemma \ref{triple-binom}, we can now show that all homogeneous elements of $U^-$ satisfy Serre relations with all real vertices, with the order of the Serre relation depending only on the homogeneous degree.

\begin{theorem}\label{gen-serre}
	Given elements $x,y \in U^-$ satisfying respectively the $a$-th order and $b$-th order Serre relations with $F_j$, then $xy$ satisfies the $(a+b)$-th order Serre relation with $F_j$.
\end{theorem}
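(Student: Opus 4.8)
The plan is to repackage the $a$-th order Serre relation as the vanishing of a single operator and then exploit commutativity. Let $L$ and $R$ be the commuting operators on $U^-$ given by left and right multiplication by $F_j$. Clearing the denominators $[n]!$ in the $a$-th order Serre relation rewrites it as $\sum_{t=0}^{a+1}c_t\,F_j^{\,a+1-t}\,x\,F_j^{\,t}=0$ with $c_t$ the appropriate nonzero signed $v$-binomial coefficients, that is, as $P_{a+1}(L,R)(x)=0$, where by the Gauss $q$-binomial theorem (with $q=v^2$) the associated polynomial factors as $P_n(X,Y):=\prod_{k=0}^{n-1}(X-v^{2k-n+1}Y)\in\cplx(v)[X,Y]$. (Equivalently, $P_n$ and this factorization just encode iterating Lemma~\ref{moving-Fjs}.) So the hypotheses say $P_{a+1}(L,R)(x)=0$ and $P_{b+1}(L,R)(y)=0$, and we must show $P_{a+b+1}(L,R)(xy)=0$.

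Next I would introduce the linear map $\Phi:\cplx(v)[X,Y,Z]\to U^-$ determined by $X^pY^qZ^r\mapsto F_j^{\,p}\,x\,F_j^{\,q}\,y\,F_j^{\,r}$, the three variables recording insertion of powers of $F_j$ to the left of $x$, between $x$ and $y$, and to the right of $y$. Multiplying $P_{a+1}(L,R)(x)=0$ on the left by an arbitrary $F_j^{\,p}$ and on the right by an arbitrary $F_j^{\,q}\,y\,F_j^{\,r}$ shows that $\Phi$ annihilates every $\cplx(v)[X,Y,Z]$-multiple of $P_{a+1}(X,Y)$; symmetrically it annihilates every multiple of $P_{b+1}(Y,Z)$. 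Hence $\Phi$ kills the ideal $J:=(P_{a+1}(X,Y),\,P_{b+1}(Y,Z))$. Since $\Phi(P_{a+b+1}(X,Z))=P_{a+b+1}(L,R)(xy)$, the theorem reduces to the purely algebraic statement that $P_{a+b+1}(X,Z)\in J$.

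To prove this membership I would run the division explicitly. First reduce $P_{a+b+1}(X,Z)$ modulo $P_{b+1}(Y,Z)$ — this is Lemma~\ref{moving-Fjs} applied to $y$, pulling the rightmost powers of $F_j$ leftward past $y$ — producing terms $F_j^{\,p}\,x\,F_j^{\,q}\,y\,F_j^{\,r}$ with $r\le b$; then reduce the remaining high powers of $F_j$ on the far left modulo $P_{a+1}(X,Y)$, which is Lemma~\ref{moving-Fjs} applied to $x$, bringing $q$ down to $q\le a$ as well. The coefficient of each surviving monomial $X^pY^qZ^r$ with $q\ge1$ (i.e. with a power of $F_j$ left trapped between $x$ and $y$) is a single sum of products of three binomial coefficients, and the identity of Lemma~\ref{triple-binom} is precisely what makes all of these vanish; the coefficients of the $q=0$ monomials then assemble into exactly the $(a+b)$-th order Serre relation for $xy$. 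Conceptually the membership can also be seen without computation: on the common zero locus of the two generators one has $X=v^{2k-a}Y$ and $Y=v^{2l-b}Z$ for some $0\le k\le a$, $0\le l\le b$, hence $X=v^{2(k+l)-(a+b)}Z$ with $0\le k+l\le a+b$, killing the factor of $P_{a+b+1}(X,Z)$ indexed by $k+l$; and $J$ is radical because $P_{a+1}(X,Y),P_{b+1}(Y,Z)$ form a regular sequence whose component primes $(X-v^{2k-a}Y,\,Y-v^{2l-b}Z)$ are pairwise comaximal, so the complete intersection $\cplx(v)[X,Y,Z]/J$ is reduced.

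The step I expect to be the main obstacle is the combinatorial bookkeeping in the effective route: arranging the two successive reductions so that the ``mixed'' leftover terms genuinely cancel, which is exactly the role played by Lemma~\ref{triple-binom} and explains why that identity is established immediately beforehand. Everything else — the operator reformulation, the properties of $\Phi$, and (in the conceptual route) the geometry of $V(J)$ — is routine, the only remaining subtlety being the radicality of $J$.
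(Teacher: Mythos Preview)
Your proposal is correct and takes a genuinely different, more structural route than the paper. The paper argues by direct expansion: it splits the sum $\sum_{t=0}^{a+b+1}(-1)^{a+b+1-t}F_j^{(a+b+1-t)}xyF_j^{(t)}$ at $t=b$, applies Lemma~\ref{moving-Fjs} once to push powers of $F_j$ past $y$ and then again to push them past $x$, and is left with a triple sum of $v$-binomials whose vanishing is exactly Lemma~\ref{triple-binom}. Your reformulation---encode the $n$-th order Serre relation as $P_{n+1}(L,R)=0$ with $P_{m}(X,Y)=\prod_{k=0}^{m-1}(X-v^{2k-m+1}Y)$, and reduce everything to the ideal membership $P_{a+b+1}(X,Z)\in J:=\bigl(P_{a+1}(X,Y),\,P_{b+1}(Y,Z)\bigr)$---explains \emph{why} Lemma~\ref{triple-binom} must hold rather than merely verifying it. In fact the membership has a short proof that bypasses Lemma~\ref{triple-binom} and your radicality discussion entirely: divide $P_{a+b+1}(X,Z)$ by the $X$-monic polynomial $P_{a+1}(X,Y)$ to get a remainder $R$ with $\deg_X R\le a$; specializing $Y\mapsto v^{2l-b}Z$ sends $P_{a+1}(X,Y)$ to a product of $a{+}1$ consecutive linear factors of $P_{a+b+1}(X,Z)$, so the specialized remainder is divisible by a polynomial of $X$-degree $a{+}1$ and hence vanishes; thus each $Y-v^{2l-b}Z$ divides $R$, whence $P_{b+1}(Y,Z)\mid R$. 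Two minor comments on your write-up: the sentence ``reduce $P_{a+b+1}(X,Z)$ modulo $P_{b+1}(Y,Z)$'' is vacuous as stated since $P_{a+b+1}(X,Z)$ contains no $Y$ (you mean the corresponding manipulation in $U^-$, which is the paper's computation), and the radicality of $J$---while true---deserves one more line (e.g.\ $R/J$ is free of rank $(a{+}1)(b{+}1)$ over $\cplx(v)[Z]$ and surjects by CRT onto the product of the $(a{+}1)(b{+}1)$ quotients $\cplx(v)[Z]$, hence is isomorphic to it). What your approach buys is a conceptual explanation and a shorter argument; what the paper's approach buys is the explicit identity of Lemma~\ref{triple-binom}.
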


\begin{proof}	
	$$\sum_{t=0}^{a+b+1} (-1)^{(1+a+b-t)}  F_j^{(1+a+b-t)}xy F_j^{(t)}$$ 
	
	%$$=\sum_{t=0}^b (-1)^{1+a+b-t} F_j^{(1+a+b-t)} xy F_j^{(t)} + \sum_{t=b+1}^{a+b+1} (-1)^{(1+a+b-t)} F_j^{(1+a+b-t)} xy F_j^{(t)}$$
	
	$$=\sum_{t=0}^b (-1)^{1+a+b-t} F_j^{(1+a+b-t)} xy F_j^{(t)} + \sum_{s=0}^{a} (-1)^{(1+a+b-(s+b+1))} F_j^{(1+a+b-(s+b+1))} xy F_j^{(s+b+1)}.$$
	
	By Lemma \ref{moving-Fjs}, we have
	
	$$=\sum_{t=0}^b (-1)^{1+a+b-t} F_j^{(1+a+b-t)} xy F_j^{(t)} + \sum_{s=0}^{a} (-1)^{(a-s)} F_j^{(a-s)} x \sum_{k=0}^{b} (-1)^{(b+k)} \binom{b+s-k}{s} F_j^{(b+1+s-k)} y F_j^{(k)}$$
	
	$$ = \sum_{t=0}^b \left[  (-1)^{1+a+b-t} F_j^{(1+a+b-t)} x   + \sum_{s=0}^{a} (-1)^{(a+b+t-s)} \binom{b+s-t}{s} F_j^{(a-s)} x  F_j^{(b+1+s-t)} \right ] yF_j^{(t)}$$ 
	
	It suffices to show that the expression in the brackets in the line above is $0$ for $0 \leq t \leq b$.
	
%	$$(-1)^{1+a+b-t} F_j^{(1+a+b-t)} x   + \sum_{s=0}^{a} (-1)^{(a+b+t-s)} \binom{b+s-t}{s} F_j^{(a-s)} x  F_j^{(b+1+s-t)} =0$$
	
	To use the $a$-th order Serre relation for $x$, we must have that $b+1+s-t \geq a+1$ i.e. $s \geq a-b+t$. 
	
	$$(-1)^{1+a+b-t} F_j^{(1+a+b-t)} x   + \sum_{s=0}^{a} (-1)^{(a+b+t-s)} \binom{b+s-t}{s} F_j^{(a-s)} x  F_j^{(b+1+s-t)}$$
	
	%$$ = (-1)^{1+a+b-t} F_j^{(1+a+b-t)} x   + \sum_{s=0}^{a-b+t-1} (-1)^{(a+b+t-s)} \binom{b+s-t}{s} F_j^{(a-s)} x  F_j^{(b+1+s-t)}$$ 
	%$$+ \sum_{s=a-b+t}^{a} (-1)^{(a+b+t-s)} \binom{b+s-t}{s} F_j^{(a-s)} x  F_j^{(b+1+s-t)} $$

	$$ =  (-1)^{1+a+b-t} F_j^{(1+a+b-t)} x   + \sum_{s=0}^{a-b+t-1} (-1)^{(a+b+t-s)} \binom{b+s-t}{s} F_j^{(a-s)} x  F_j^{(b+1+s-t)}$$ 
	$$+ \sum_{s=0}^{b-t} (-1)^{(a+b+t-(s+a-b+t))} \binom{b+(s+a-b+t)-t}{s+a-b+t} F_j^{(a-(s+a-b+t))} x  F_j^{(b+1+(s+a-b+t)-t)} $$

	$$ =  (-1)^{1+a+b-t} F_j^{(1+a+b-t)} x   + \sum_{s=0}^{a-b+t-1} (-1)^{(a+b+t-s)} \binom{b+s-t}{s} F_j^{(a-s)} x  F_j^{(b+1+s-t)}$$ 
	$$+ \sum_{s=0}^{b-t} (-1)^{s} \binom{s+a}{s+a-b+t} F_j^{(b-t-s)} \sum_{i=0}^{a} (-1)^{a+i} \binom{a+s-i}{s} F_j^{(a+s+1-i)}x  F_j^{(i)} $$

	$$ =  (-1)^{1+a+b-t} F_j^{(1+a+b-t)} x   + \sum_{s=0}^{a-b+t-1} (-1)^{(a+b+t-s)} \binom{b+s-t}{s} F_j^{(a-s)} x  F_j^{(b+1+s-t)}$$ 
	$$+ \sum_{i=0}^{a} \sum_{s=0}^{b-t} (-1)^{s+a-i} \binom{a+s-i}{s}  \binom{s+a}{s+a-b+t} \binom{a+b- i - t + 1}{b-t-s} F_j^{(b-t+ a + 1 -i)}x  F_j^{(i)} $$

	To show that this sum is equal to 0, it suffices to show for $0 \leq i \leq a$

	%If $b+1-t+s = i \Rightarrow s= i - b + t -1$. 
	
		$$ (-1)^{(a+i-1)} \binom{i-1}{b-t} + \sum_{s=0}^{b-t} (-1)^{s+a-i} \binom{a+s-i}{s}  \binom{s+a}{s+a-b+t} \binom{a+b- i - t + 1}{b-t-s} = 0$$

		Substituting $c=b-t$ and rearranging, this is equivalent to proving
		
			$$ \sum_{s=0}^{c} (-1)^{s} \binom{a+s-i}{s}  \binom{s+a}{s+a-c} \binom{a+c-i+1}{c-s} =\binom{i-1}{c} $$
		
		Observe that this is exactly the statement in Lemma \ref{triple-binom} with $b = a-i$. The theorem follows.
\end{proof}

\begin{corollary}\label{algebrahom}
	The element $b_{i,l}$ satisfies the order $l$ Serre relation with $b_j = F_{j}$.  In particular, the assignment $F_\iota \mapsto b_\iota$ for $\iota \in I_\infty$ extends to an endomorphism of the algebra $U^-$.
	
%	as $b_{i,l}$ is a homogenous linear combination of products of $F_{i,k}$ that individually satisfy $k$-th order Serre relations. 
\end{corollary}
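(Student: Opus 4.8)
The plan is to deduce the Serre-relation assertion for $b_{i,l}$ from Theorem~\ref{gen-serre} by expanding $b_{i,l}$ into monomials in the generators $F_{(i,k)}$, and then to obtain the endomorphism statement by checking that the assignment $F_\iota \mapsto b_\iota$ respects every defining relation of $U^-$.

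For the first assertion, I would begin by recording that in $Q(\omega,r)$ each generator $F_{(i,k)}$ satisfies the $k$-th order Serre relation with every real $F_j$: since there is exactly one edge between $i$ and $j$ we have $(j,(i,k)) = k(j,i) = -k$, so the defining Serre relation for the pair $j,(i,k)$ reads $\sum_{t+t'=k+1}(-1)^t F_j^{(t)} F_{(i,k)} F_j^{(t')} = 0$, which is the $k$-th order Serre relation after the substitution $t \mapsto k+1-t$. Next I would invoke the properties of $b_{i,l}$ recalled in Section~\ref{2.2}: from $b_{i,1} = F_{(i,1)}$ and $b_{i,l} - F_{(i,l)} \in \cplx(v)\langle F_{(i,k)} : 1 \le k < l\rangle$ it follows that $b_{i,l}$ lies in the subalgebra generated by $F_{(i,1)},\dots,F_{(i,l)}$, and since $b_{i,l}$ is homogeneous of degree $-li$ while $\deg F_{(i,k)} = -ki$, it is a $\cplx(v)$-linear combination of monomials $F_{(i,k_1)} \cdots F_{(i,k_m)}$ with $k_1 + \cdots + k_m = l$ and $m \ge 1$. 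Applying Theorem~\ref{gen-serre} inductively on the length of such a monomial, each of them satisfies the $(k_1 + \cdots + k_m) = l$-th order Serre relation with $F_j$. Finally, ``$x$ satisfies the $a$-th order Serre relation with $F_j$'' is the vanishing of an expression that is $\cplx(v)$-linear in $x$, so the elements satisfying it form a linear subspace of $U^-$; since $b_{i,l}$ lies in this subspace, it satisfies the $l$-th order Serre relation with $F_j = b_j$.

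For the endomorphism, I would take the algebra homomorphism $\phi\colon A \to U^-$ with $\phi(F_\iota) = b_\iota$ for all $\iota \in I_\infty$ --- which exists because $A$ is free, and which satisfies $\phi(F_j^{(t)}) = F_j^{(t)}$ since $\phi(F_j) = b_j = F_j$ --- and verify that $\phi$ annihilates the generators of the defining ideal of $U^-$. In $Q(\omega,r)$ with $\omega > 1$ the form $(\cdot,\cdot)$ vanishes on a pair of elements of $I_\infty$ only when it consists of two distinct real vertices $j_k \ne j_{k'}$ --- indeed $(j_k,j_k) = 2$, $(j_k,(i,l)) = -l$, and $((i,l),(i,l')) = ll'(2-2\omega) \ne 0$ --- and on such a pair $\phi$ is the identity, so $\phi([F_{j_k},F_{j_{k'}}]) = [F_{j_k},F_{j_{k'}}] = 0$ in $U^-$. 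For a Serre relation attached to a real vertex $j$ and some $\iota \ne j$: if $\iota$ is another real vertex the relation degenerates to a commutation relation, handled above; if $\iota = (i,l)$, then $\phi$ sends the relation to $\sum_{t+t'=l+1}(-1)^t F_j^{(t)}\, b_{i,l}\, F_j^{(t')}$, which is precisely the $l$-th order Serre relation for $b_{i,l}$ with $F_j$ and so vanishes by the first part. Hence $\phi$ factors through $U^-$, giving the desired endomorphism.

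I expect no genuine obstacle here, since the substantive computation is already contained in Theorem~\ref{gen-serre} (via Lemma~\ref{triple-binom}). The only points needing care are confirming that $b_{i,l}$ is supported on the weight-$l$ monomials in the $F_{(i,k)}$, which is forced by homogeneity together with the recursive description of $b_{i,l}$, and the elementary remark that satisfying a fixed-order Serre relation is a linear condition and is therefore inherited by linear combinations; past that it is a matter of keeping the indexing in the defining Serre relations, in the ``$a$-th order'' form, and in the monomial expansion of $b_{i,l}$ consistent with one another.
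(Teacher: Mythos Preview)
Your argument is correct and is exactly the approach the paper intends: the paper states the corollary immediately after Theorem~\ref{gen-serre} without any explicit proof, relying on the same chain of observations you spell out (each $F_{(i,k)}$ satisfies the $k$-th order Serre relation, monomials in these of total weight $l$ satisfy the $l$-th order relation by iterating Theorem~\ref{gen-serre}, and the condition is linear in $x$). Your verification that $\phi$ respects the defining relations is also the natural way to obtain the endomorphism statement, and your case analysis of the pairs $(\iota,\iota')$ in $Q(\omega,r)$ is correct.
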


\subsection{Crystal Serre Relations}

We start by proving the following equality.

\begin{lemma}\label{basicequality}
	
	For all $\ell \geq 1$ and $n \geq 0$, $\f_{i,\ell}\f_j^{\ell+n+1} . 1 = \f_j \f_{i,\ell} \f_j^{\ell+n} . 1$ in $\B$.
	
	%	$$\f_{i,\ell}\f_j^{\ell+1} . 1 = \f_j \f_{i,\ell} \f_j^{\ell} . 1. $$
	
\end{lemma}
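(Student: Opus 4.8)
The plan is to work directly with the definition of the Kashiwara operators and the direct sum decomposition of Lemma \ref{decomp}, combined with the Serre relation satisfied by $b_{i,\ell}$ established in Corollary \ref{algebrahom}. The key point is that $\f_{i,\ell}$ acts on $U^-$ by left multiplication by $b_{i,\ell}$ \emph{followed by correction terms coming from the $\mathcal{K}_i$-decomposition}; but when we apply it to an element that is a sum of the form $F_j^{(m)}.1$-type terms, we need to understand how $b_{i,\ell}$ interacts with powers of $F_j$. Since $b_{i,\ell}$ satisfies the $\ell$-th order Serre relation with $F_j$, Lemma \ref{moving-Fjs} tells us exactly how to commute $b_{i,\ell}$ past $F_j^{(\ell+1+n)}$.

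First I would compute both sides explicitly. For the left side, note $\f_j^{\ell+n+1}.1 = F_j^{(\ell+n+1)}$ (since $1 \in \mathcal{K}_j$ and the Kashiwara operator $\f_j$ just raises the divided power), so $\f_{i,\ell}\f_j^{\ell+n+1}.1$ requires decomposing $b_{i,\ell}F_j^{(\ell+n+1)}$ — or rather, first decomposing $F_j^{(\ell+n+1)}$ with respect to the $\mathcal{C}_i$-decomposition (it lies in $\mathcal{K}_i$ since it involves no $F_{(i,m)}$ factors and $e'_{(i,m)}$ annihilates it), applying $\f_{i,\ell}$ to get $b_{i,\ell}F_j^{(\ell+n+1)}$, and then this is the answer since $b_{i,(\ell)}z_c = b_{i,\ell} z_c$ with $z_c = F_j^{(\ell+n+1)}$. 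For the right side, $\f_j\f_{i,\ell}\f_j^{\ell+n}.1$: similarly $\f_{i,\ell}\f_j^{\ell+n}.1 = b_{i,\ell}F_j^{(\ell+n)}$, and then I must apply $\f_j$, which requires writing $b_{i,\ell}F_j^{(\ell+n)}$ in the form $\sum_m F_j^{(m)} z_m$ with $z_m \in \mathcal{K}_j = \ker e'_j$ and then raising each divided power. Here is where Lemma \ref{moving-Fjs} enters: since $n-1 \geq -1$... more carefully, I apply Lemma \ref{moving-Fjs} to rewrite $b_{i,\ell}F_j^{(\ell+n)}$ (valid when $\ell+n \geq \ell+1$, i.e. $n\geq 1$; the case $n=0$ must be handled separately, likely directly from the order-$\ell$ Serre relation) as a sum $\sum_{t=0}^{\ell}(-1)^{\ell+t}\binom{\ell+n-1-t}{n-1}F_j^{(\ell+n-t)}b_{i,\ell}F_j^{(t)}$, but I actually want the decomposition with the $\ker e'_j$ part on the \emph{right}, so I would instead use the mirror-image version of Lemma \ref{moving-Fjs} (moving $F_j$'s to the left past $b_{i,\ell}$), which follows by the same induction or by an anti-automorphism argument.

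The main obstacle I anticipate is bookkeeping the two different direct-sum decompositions simultaneously — the $\mathcal{C}_i$-decomposition used to define $\f_{i,\ell}$ and the $F_j^{(\bullet)}\mathcal{K}_j$-decomposition used to define $\f_j$ — and verifying that the "tail" terms $b_{i,\ell}F_j^{(t)}$ for $0 \le t \le \ell-1$ lie in $\mathcal{K}_j$, or at least tracking their further decomposition. In fact I expect the cleanest route is: apply $\f_{i,\ell}$ and $\f_j$ and reduce the claimed identity, after cancelling the common factor and using the explicit formulas, to a polynomial identity in quantum integers of exactly the same flavor as the ones verified at the end of Lemma \ref{moving-Fjs} (namely an identity like $[\ell+1+n][\ell+1-t]+[t][n]=[\ell+1][\ell+1+n-t]$, or a $q$-binomial identity reducible to Lemma \ref{triple-binom}). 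So the endgame is a routine quantum-number verification; the real work is setting up the two decompositions correctly and isolating that identity. Throughout, I would use that $e'_{(i,m)}$ kills any element of $\cplx(v)\langle F_j : j \in I^{re}\rangle$, so all the $F_j$-polynomials appearing are in $\mathcal{K}_i$ and the $\f_{i,\ell}$-action on them is literally left multiplication by $b_{i,\ell}$, which is what makes Lemma \ref{moving-Fjs} directly applicable.
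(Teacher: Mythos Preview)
Your opening moves are right: $\f_j^m.1 = F_j^{(m)}$, $\f_{i,\ell}$ on an element of $\mathcal{K}_i$ is left multiplication by $b_{i,\ell}$, and Lemma~\ref{moving-Fjs} is the tool for pushing $b_{i,\ell}$ past $F_j^{(\ell+1+n)}$. But two genuine gaps remain.

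First, the terms $b_{i,\ell}F_j^{(t)}$ for $0\le t\le \ell$ are \emph{not} in $\mathcal{K}_j$ (they are not even close: $e_j'(b_{i,\ell}F_j^{(t)})$ does not vanish for $t\ge 1$), so the phrase ``or at least tracking their further decomposition'' hides all of the real work. The paper introduces the elements $z_{k,c}\in\mathcal{K}_j$ by $b_{i,\ell}F_j^{(c)}=\sum_{k=0}^c F_j^{(k)}z_{k,c}$, derives a recursion for them, and proves the key structural fact $z_{k,c}=v^{k(c-k-\ell)}z_{0,c-k}$. Without this relation you cannot collapse the double sum coming from Lemma~\ref{moving-Fjs} into anything usable. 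No ``mirror-image'' version of Lemma~\ref{moving-Fjs} bypasses this: that lemma already writes things as $F_j^{(\ast)}b_{i,\ell}F_j^{(\ast)}$, and the issue is precisely that the rightmost factor is not in $\mathcal{K}_j$.

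Second, your expectation that the claim ``reduces to a polynomial identity in quantum integers'' like the one at the end of Lemma~\ref{moving-Fjs} is too optimistic, because the statement is an equality in $\B=\L/v^{-1}\L$, not in $U^-$. In fact the two sides differ in $U^-$: one computes
\[
\f_{i,\ell}\f_j^{\ell+1+n}.1=\sum_{r=0}^{\ell} v^{-r(n+r+1)}F_j^{(r+n+1)}z_{0,\ell-r},
\]
and this is \emph{not} equal to $\f_j\f_{i,\ell}\f_j^{\ell+n}.1$ on the nose. The argument instead needs (i) a separate quantum binomial identity (Lemma~\ref{u_{n,t}sum}) to reach the displayed formula, (ii) the lemma $z_{0,c}\equiv \f_{i,\ell}\f_j^c.1$ in $\B$ for $c\le\ell$ (in particular $z_{0,c}\in\L$), and then (iii) the observation that $-r(n+r+1)<0$ for $r\ge 1$ kills all terms but $r=0$ modulo $v^{-1}\L$, leaving $F_j^{(n+1)}z_{0,\ell}\equiv \f_j^{n+1}\f_{i,\ell}\f_j^{\ell}.1$. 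Your proposal does not anticipate steps (ii) and (iii), and without them the plan cannot close.
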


To prove this lemma, we prove special properties regarding the decomposition of $b_{i,l}F_j^{(c)}$ into $\bigoplus_{l \geq 0} F_j^{(l)}\mathcal{K}_j$ as given in Lemma \ref{decomp}.

\begin{definition}\label{zkc}
	
	Define $z_{k,c}^{\ell}$ for all $c \geq 0$ and $c \geq k \geq 0$ such that $$b_{i,l}F_j^{(c)} = \sum_{k=0}^{c} F_{j}^{(k)} z_{k,c}^\ell.$$
	
\end{definition}

	This provides a unique definition of $z_{k,c}^{\ell}$ by Proposition 3.16 in Bozec. For $k < 0$ or $c < 0$, we define $z_{k,c}^\ell =0 $. Note that the superscript for $z_{k,c}$ is always $\ell$ for the following sections, thus we often omit the superscript.

\begin{lemma}\label{recursion-lemma}
	For all $k, c \geq 0$ and $\ell \geq 1$, the following recursion holds: 
	
	%(where all $z_{k,c}$ in the equation have superscript $\ell$)
	
	$$z_{k,c} = \frac{1}{[c]} (z_{k,c-1} F_j - v^{-\ell + 2(c-k-1)} F_j z_{k,c-1}^\ell + [k]v^{-\ell + 2(c-k)} z_{k-1,c-1}).$$

\end{lemma}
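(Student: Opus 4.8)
The plan is to establish the recursion by applying the defining equation $b_{i,l}F_j^{(c)} = \sum_{k} F_j^{(k)} z_{k,c}^\ell$ and relating $F_j^{(c)}$ to $F_j^{(c-1)}$ via $F_j^{(c)} = \frac{1}{[c]} F_j^{(c-1)} F_j$. First I would write
$$[c]\, b_{i,l} F_j^{(c)} = b_{i,l} F_j^{(c-1)} F_j = \Big(\sum_{k=0}^{c-1} F_j^{(k)} z_{k,c-1}\Big) F_j,$$
so the task reduces to understanding $F_j^{(k)} z_{k,c-1} F_j$ and re-expanding the result in the basis $\bigoplus_{m \geq 0} F_j^{(m)} \mathcal{K}_j$. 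Since $z_{k,c-1} \in \mathcal{K}_j = \ker(e_j')$ but $z_{k,c-1} F_j \notin \mathcal{K}_j$ in general, the key computational input is the explicit decomposition of $z F_j$ for $z \in \mathcal{K}_j$ into its components in $\mathcal{K}_j$ and $F_j \mathcal{K}_j$. This is exactly the kind of formula governed by the commutation rule for $e_j'$ in property (1) of Section \ref{2.2} together with the divided-power identity $F_j^{(k)} F_j = [k+1] F_j^{(k+1)}$; concretely, one shows $z F_j = F_j z + (\text{correction})$ where the correction is controlled by $e_j'(z F_j)$, and since $e_j'$ is a twisted derivation with $e_j'(F_j) = 1$, one gets $e_j'(z F_j) = e_j'(z) F_j + v^{(-j,|z|+j)} z \cdot 1 = v^{(-j,|z|)-2} z$ using $(j,j) = 2$ for a real vertex. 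Here $|z| = |z_{k,c-1}|$ has a $j$-component determined by $c-1-k$ and an $i$-component $-li$, which after evaluating $(-j, \cdot)$ produces the power $-\ell + 2(c-k-1)$ that appears in the statement.

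Carrying this out: I would substitute the expansion $z_{k,c-1} F_j = (\text{its }\mathcal{K}_j\text{-part}) + F_j \cdot(\text{its next }\mathcal{K}_j\text{-part})$ into $\sum_k F_j^{(k)} z_{k,c-1} F_j$, use $F_j^{(k)} F_j = [k+1] F_j^{(k+1)}$ to shift indices, and collect the coefficient of $F_j^{(k)}$ on the right-hand side. Matching coefficients of $F_j^{(k)}$ on both sides of $[c]\, b_{i,l}F_j^{(c)} = \sum_k F_j^{(k)}\big(z_{k,c} \cdot [c]\big)$ then yields three contributions: one from the $\mathcal{K}_j$-part of $z_{k,c-1}F_j$ (giving the $z_{k,c-1}F_j$ term), one from the $F_j\mathcal{K}_j$-part of $z_{k,c-1}F_j$ shifted up by the $F_j^{(k-1)} \mapsto F_j^{(k)}$ index shift with its $[k]$ factor (giving the $[k]v^{-\ell+2(c-k)} z_{k-1,c-1}$ term), and one from re-expressing the remaining piece (giving the $-v^{-\ell+2(c-k-1)} F_j z_{k,c-1}$ term, where the sign comes from moving the correction term across the twisted derivation). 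Dividing by $[c]$ gives the claimed recursion.

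The main obstacle I anticipate is bookkeeping the exponents of $v$: one must track the $\ints I$-degree of $z_{k,c-1}$ carefully — its $j_k$-components are zero for $k \neq$ the relevant index, its $j$-component contributes to $(-j, |z|)$, and its $i$-component $-\ell i$ contributes $(-j, -\ell i) = \ell(j,i) = -\ell$ since $(i,j) = -1$ in $Q(\omega,r)$ — and combine these correctly with the shift coming from $F_j^{(k-1)} F_j = [k] F_j^{(k)}$ versus $F_j^{(k)} F_j = [k+1] F_j^{(k+1)}$. A secondary subtlety is justifying that the "correction term" computation is valid at the level of the genuine decomposition in Lemma \ref{decomp} rather than merely modulo lower terms; this follows because $e_j'$ detects precisely the $F_j \mathcal{K}_j$-component (indeed $e_j'$ annihilates $\mathcal{K}_j$ and acts invertibly in the appropriate graded sense on the $F_j^{(m)}\mathcal{K}_j$ pieces), so the decomposition of $z_{k,c-1}F_j$ is pinned down exactly by computing $e_j'(z_{k,c-1}F_j)$ and iterating, but since $z_{k,c-1} F_j$ lies in $\mathcal{K}_j \oplus F_j\mathcal{K}_j$ only (degree reasons: applying $e_j'$ twice gives $e_j'(v^{(-j,|z|)-2} z) = 0$), a single application of $e_j'$ suffices and no iteration is needed.
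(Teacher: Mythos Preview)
Your approach is essentially identical to the paper's: both multiply the defining relation for $b_{i,l}F_j^{(c-1)}$ on the right by $F_j$, decompose each $z_{k,c-1}F_j$ into its $\mathcal{K}_j \oplus F_j\mathcal{K}_j$ components (the paper cites this decomposition from the proof of Bozec's Proposition~3.16, while you derive it directly via $e_j'$), reindex using $F_j^{(k)}F_j = [k+1]F_j^{(k+1)}$, and match coefficients of $F_j^{(k)}$. One small slip: the twisted-derivation rule gives $e_j'(zF_j) = e_j'(z)F_j + v^{(-j,|z|)}z\,e_j'(F_j) = v^{(-j,|z|)}z$, not $v^{(-j,|z|+j)}z$; with the correct exponent you land exactly on $-\ell + 2(c-k-1)$ as claimed, so the bookkeeping concern you flagged is the only thing to fix.
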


\begin{proof}

	We observe that $|z_{k,c}| = -\ell i - (c-k)j$, as $|z_{k,c}^\ell| + |F_j^{(k)}| = |b_{i,l}F_j^{(c)}|$ . Thus $-(|z_{k,c}|, j)  = -\ell + 2 (c-k)$.

	From the proof of Proposition 3.16 in \cite{Bo2}, we can write
	$$z_{k,c-1} F_j = (z_{k,c-1} F_j - v^{-(|z_{k,c-1}|,j)}F_jz_{k,c-1}) + v^{-(|z_{k,c-1}|, j)}F_jz_{k,c-1},$$
	where the first term in parenthesis lies in $\mathcal{K}_j$.

	$$\sum_{k=0}^{c} F_{j}^{(k)} z_{k,c} = b_{i,l}F_j^{(c)} = \frac{1}{[c]} b_{i,l}F_j^{(c-1)} F_j = \frac{1}{[c]} \sum_{k=0}^{c-1} F_j^{(k)} z_{k,c-1} F_j$$
	$$= \frac{1}{[c]} \sum_{k=0}^{c-1} F_j^{(k)} ((z_{k,c-1} F_j - v^{-\ell + 2(c-k-1)}F_jz_{k,c-1}) + v^{-\ell + 2(c-k-1)}F_jz_{k,c-1}).$$
		
	The lemma now follows from the uniqueness of the decomposition in Propsition 3.16 of \cite{Bo2}.
\end{proof}

\begin{lemma}
	For all $k,c \geq  0$ and $\ell \geq 1$, $z_{k,c}^\ell = v^{k(c-k-l)} z_{0,c-k}^\ell$. 
\end{lemma}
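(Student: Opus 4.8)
The plan is to prove the identity by a double induction --- an outer induction on $k$, and for each fixed $k$ an inner induction on $c$ --- feeding the recursion of Lemma~\ref{recursion-lemma} back into itself and reducing everything at the end to one elementary identity among quantum integers. For the base of the outer induction, $k=0$, the claim $z_{0,c}^\ell = v^{0}z_{0,c}^\ell$ is a tautology.

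It is convenient first to dispose of the degenerate range $k>c$. Since $b_{i,\ell}F_j^{(c)}$ is homogeneous of degree $-\ell i - cj$, a nonzero $z_{k,c}^\ell$ must have degree $-\ell i-(c-k)j$ (as recorded in the proof of the previous lemma), which for $k>c$ has a strictly positive coefficient on $j$ and hence cannot occur in $U^-$; thus $z_{k,c}^\ell=0$ for $k>c$, while the right-hand side $v^{k(c-k-\ell)}z_{0,c-k}^\ell$ vanishes too because $z_{0,c-k}^\ell=0$ for $c-k<0$. This settles the inner induction for $0\le c<k$, in particular the base case $c=0$ when $k\ge 1$.

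For the inductive step, fix $k\ge 1$, assume the statement for $k-1$ and all $c$, and for $c\ge 1$ assume it for $(k,c-1)$. By Lemma~\ref{recursion-lemma},
\[ [c]\,z_{k,c}^\ell = z_{k,c-1}^\ell F_j - v^{-\ell+2(c-k-1)}F_j z_{k,c-1}^\ell + [k]\,v^{-\ell+2(c-k)}z_{k-1,c-1}^\ell . \]
Substituting the inner hypothesis $z_{k,c-1}^\ell = v^{k(c-1-k-\ell)}z_{0,c-1-k}^\ell$ and the outer hypothesis $z_{k-1,c-1}^\ell = v^{(k-1)(c-k-\ell)}z_{0,c-k}^\ell$, and then recognizing
\[ z_{0,c-1-k}^\ell F_j - v^{-\ell+2(c-k-1)}F_j z_{0,c-1-k}^\ell = [c-k]\,z_{0,c-k}^\ell \]
as the $k=0$ instance of Lemma~\ref{recursion-lemma} with parameter $c-k$ in place of $c$, the right-hand side becomes a scalar multiple of $z_{0,c-k}^\ell$. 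After extracting the common factor $v^{k(c-k-\ell)}$ --- which rests on the exponent identity $-\ell+2(c-k)+(k-1)(c-k-\ell)=k(c-k-\ell)+(c-k)$ --- and dividing by $[c]\ne 0$, the claim reduces to
\[ v^{-k}[c-k] + v^{c-k}[k] = [c], \]
which is immediate from $[n]=(v^n-v^{-n})/(v-v^{-1})$.

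The only real work is the bookkeeping of the $v$-exponents and checking that the degenerate cases ($k>c$, $c=0$, and $c=k$ where $[c-k]=[0]=0$ kills the first term) are consistent with the recursion; none of this poses a genuine obstacle, so the argument is essentially mechanical once the reduction above is arranged. The one place where a little care is needed is making sure the inner induction is organized so that the instance $(k,c-1)$ invoked above is already available — which is exactly why the $k>c$ range is treated first.
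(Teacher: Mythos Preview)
Your proof is correct and is essentially the same as the paper's. The paper phrases it as defining a candidate $z'_{k,c}:=v^{k(c-k-\ell)}z_{0,c-k}$ and verifying that $z'_{k,c}$ satisfies the recursion of Lemma~\ref{recursion-lemma} (together with the obvious initial value $z'_{0,0}=b_{i,\ell}$), whereas you run a direct double induction; but the substitutions and the reduction to the identity $v^{-k}[c-k]+v^{c-k}[k]=[c]$ are identical, so the two arguments differ only in packaging.
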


\begin{proof}

	We define $z'_{0,0} = b_{i,l}$ and define $z'_{0,c} = \frac{1}{[c]} (z'_{0,c-1} F_j - v^{(-\ell +  2(c-1))} F_jz'_{0,c-1})$ for $c > 0$. Then we define $z'_{k,c}$ for $0 < k \leq c$ by $z'_{k,c} = v^{k(c-k-l)}z'_{0, c-k}$. It follows from Lemma \ref{recursion} and the fact that $z_{0,0} = b_{i,l}$ that $z_{0,c}' = z_{0,c}$. We need only to check that the $z_{k,c}'$ satisfy the recurrence of Lemma \ref{recursion}. This is checked in the following calculation. 
	
	$$\frac{1}{[c]} (z'_{k,c-1}F_j - v^{(-\ell + 2(c-k-1))}F_jz'_{k,c-1} + [k]v^{-\ell + 2(c-k)}z'_{k-1,c-1})$$
	
	$$=\frac{1}{[c]} (v^{k(c-1-k-\ell)}z'_{0,c-1-k} F_j - v^{(-\ell + 2(c-k)-1)} F_j v^{k(c-1-k-\ell)}z'_{0,c-1-k} + [k] v^{-\ell + 2(c-k)} v^{(k-1)(c-k-\ell)}z'_{0, c-k})$$ 
	
	$$= \frac{1}{[c]} (v^{k(c-1-k-\ell)} (z'_{0,c-1-k}F_j - v^{(-\ell + 2(c-k)-1)} F_j z'_{0,c-1-k}) + [k]v^{-\ell + 2(c-k)} v^{-(c-k) + l  + k(c-k-l)}z'_{0, c-k}) $$

	$$ = \frac{1}{[c]} ( [c-k] v^{- k + k(c-k-\ell)} z'_{0,c-k} + [k]v^{(c-k + k(c-k-\ell))} z'_{0,c-k})$$

	$$ = \frac{1}{[c]} ([c-k]v^{-k} z'_{k,c} + [k]v^{c-k}z'_{k,c})$$
	
	$$= \frac{[c-k]v^{-k} + [k]v^{c-k}}{[c]} z'_{k,c}$$
	
	$$= z'_{k,c}.$$
\end{proof}

\begin{lemma} \label{inLinfty}
	For $c \leq \ell$, $z_{0,c}^\ell \equiv \f_{i,l} \f_j^c .1$. %for $c \leq \ell$ lies in $\mathcal{L}(\infty)$
\end{lemma}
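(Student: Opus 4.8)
The plan is to prove this by induction on $c$, after first identifying the element $\f_{i,l}\f_j^c.1$ of $U^-$ concretely. The key observation is that this element is nothing but $b_{i,l}F_j^{(c)}$, and that $z_{0,c}^\ell$ is by construction the $k=0$ term of the $\bigoplus_k F_j^{(k)}\mathcal{K}_j$-decomposition of $b_{i,l}F_j^{(c)}$; so the task reduces to showing that, for $c\le\ell$, the remaining terms of that decomposition lie in $v^{-1}\L$.

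First I would check the identification $\f_{i,l}\f_j^c.1=b_{i,l}F_j^{(c)}$ in $U^-$, which also shows $b_{i,l}F_j^{(c)}\in\L$. Iterating the definition of $\f_j$ starting from $1\in\mathcal{K}_j$ gives $\f_j^c.1=F_j^{(c)}$. Since $e_{(i,m)}'(F_j)=0$ for every $m\ge 1$ by property $(2)$, the Leibniz rule (property $(1)$) forces $e_{(i,m)}'(F_j^{(c)})=0$, i.e.\ $F_j^{(c)}\in\mathcal{K}_i$; hence the decomposition of $F_j^{(c)}$ in $U^-=\bigoplus_{d\in\mathcal{C}_i}b_{i,d}\mathcal{K}_i$ is concentrated on the empty composition, and the non-isotropic formula for $\f_{i,l}$ yields $\f_{i,l}\big(F_j^{(c)}\big)=b_{i,(l)}F_j^{(c)}=b_{i,l}F_j^{(c)}$. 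The same iteration of the definition of $\f_j$ also shows that for any $w\in\mathcal{K}_j$ one has $\f_j^k(w)=F_j^{(k)}w$; I will use this repeatedly.

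Now for the induction. The base case $c=0$ is $z_{0,0}^\ell=b_{i,l}=\f_{i,l}.1$. For the inductive step, with $1\le c\le\ell$, I would combine Definition \ref{zkc} with the identity $z_{k,c}^\ell=v^{k(c-k-\ell)}z_{0,c-k}^\ell$ established just above it to write
$$b_{i,l}F_j^{(c)}-z_{0,c}^\ell=\sum_{k=1}^{c}F_j^{(k)}z_{k,c}^\ell=\sum_{k=1}^{c}v^{k(c-k-\ell)}\,\f_j^k\big(z_{0,c-k}^\ell\big),$$
where the last equality uses $z_{0,c-k}^\ell\in\mathcal{K}_j$ to replace $F_j^{(k)}z_{0,c-k}^\ell$ by $\f_j^k\big(z_{0,c-k}^\ell\big)$. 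By the inductive hypothesis $z_{0,c-k}^\ell\equiv\f_{i,l}\f_j^{c-k}.1$ for $1\le k\le c$, so in particular $z_{0,c-k}^\ell\in\L$, and since $\f_j$ preserves $\L$ (immediate from the definition of $\L$) each term $\f_j^k\big(z_{0,c-k}^\ell\big)$ lies in $\L$. Finally, for $c\le\ell$ and $1\le k\le c$ the exponent satisfies $k(c-k-\ell)\le c-1-\ell\le-1$, so every coefficient $v^{k(c-k-\ell)}$ lies in $v^{-1}\mathcal{A}$; hence the displayed difference lies in $v^{-1}\L$. Since $b_{i,l}F_j^{(c)}=\f_{i,l}\f_j^c.1\in\L$, it follows that $z_{0,c}^\ell\in\L$ and $z_{0,c}^\ell\equiv\f_{i,l}\f_j^c.1$ in $\B$, completing the induction.

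The only point that really needs care is the replacement of $F_j^{(k)}z_{0,c-k}^\ell$ by $\f_j^k\big(z_{0,c-k}^\ell\big)$: a priori, $F_j^{(k)}$ applied to an element of $\L$ need not stay in $\L$, and it is precisely the fact that $z_{0,c-k}^\ell\in\ker e_j'$ that makes these two agree and so keeps the error terms inside $v^{-1}\L$. The hypothesis $c\le\ell$ enters only to force every exponent $k(c-k-\ell)$ to be strictly negative, which is exactly the threshold past which the identity should fail.
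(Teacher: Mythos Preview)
Your proof is correct and follows essentially the same approach as the paper's: both argue by induction on $c$, isolate $z_{0,c}^\ell$ from the decomposition $b_{i,l}F_j^{(c)}=\sum_k F_j^{(k)}z_{k,c}^\ell$, invoke the identity $z_{k,c}^\ell=v^{k(c-k-\ell)}z_{0,c-k}^\ell$, rewrite $F_j^{(k)}z_{0,c-k}^\ell$ as $\f_j^k(z_{0,c-k}^\ell)$ using $z_{0,c-k}^\ell\in\mathcal{K}_j$, and conclude via the negativity of the exponent. Your write-up is in fact somewhat more careful than the paper's, making explicit the identification $\f_{i,l}\f_j^c.1=b_{i,l}F_j^{(c)}$ and the membership $b_{i,l}F_j^{(c)}\in\L$.
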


\begin{proof}

	We use induction on $c$. For $c=0$, $z_{0,c}^\ell = b_{i,l} = \f_{i,l} . 1 \in \mathcal{L} (\infty)$. 
	
	For $c>0$, by definition we have $$b_{i,l}F_j^{(c)} = \sum_{k=0}^{c} F_{j}^{(k)} z_{k,c} $$
	
	$$\Rightarrow z_{0,c} = \f_{i,l} \f_j^c . 1 - \sum_{k=1}^{c} F_{j}^{(k)} z_{k,c}.$$
	
	%$$=\f_{i,l} \f_j^c . 1 - \sum_{k=1}^{c} F_{j}^{(k)} z_{k,c}^\ell $$
	$$= \f_{i,l} \f_j^c . 1 - \sum_{k=1}^{c} F_{j}^{(k)} v^{k(c-k-\ell)}z_{0,c-k}$$
	
	Since $z_{0, c-k} \in \mathcal{K}_j$,  $F_{j}^{(k)} z_{0,c-k} = \f_j^k . z_{0, c-k}$.
	
	Thus we have 
	
	$$z_{0,c} = \f_{i,l} \f_j^c . 1 - \sum_{k=1}^{c} v^{k(c-k-\ell)} \f_j^k .  z_{0,c-k}$$

	$v^{k(c-k-l)} \f_j^k z_{0,c-k} \in v^{-1}\L$ because $k(c-k-l) < 0 $ for $0 < k \leq c \leq l$ and $\L$ is stable under $\f_j^k$. Thus $z_{0,c} \equiv \f_{i,l}\f_j^c.1$ in $\B$. 
\end{proof}

%%%CHECK THAT THE ABOVE PROOF IS CORRECT.

Now we prove a particular identity involving quantum binomial coefficients that occurs in the proof of Lemma \ref{basicequality}.

\begin{lemma}\label{u_{n,t}sum}
	For $r \geq 0$ and $n \geq 1$, the following quantum binomial identity holds 
	
	$$\sum_{k=0}^{r} v^{-kr}  (-1)^{k-r} \binom{n-k+r}{n} \binom{r+n+1}{k} = v^{-r(n+r+1)}.$$
\end{lemma}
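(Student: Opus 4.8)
The plan is to verify the identity
$$\sum_{k=0}^{r} v^{-kr}\,(-1)^{k-r}\binom{n-k+r}{n}\binom{r+n+1}{k}=v^{-r(n+r+1)}$$
by recognizing it as a specialization of the identity already proven in Lemma \ref{triple-binom}. Recall that Lemma \ref{triple-binom} asserts, in the $v$-analogue, the relation
$$\sum_{s=0}^{c}(-1)^{s}\binom{b+s}{s}\binom{a+s}{c}\binom{b+c+1}{c-s}=\binom{a-b-1}{c},$$
after one passes from $[n]_q$ to $[n]_v$ via $[n]_v=q^{(1-n)/2}[n]_q$ with $q=v^2$. So the first step is to find the substitution $(a,b,c)\mapsto(\cdot,\cdot,\cdot)$ and the index matching $s\leftrightarrow k$ that turns the triple sum into the sum of Lemma \ref{u_{n,t}sum}. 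Matching the outer summation range and the binomial $\binom{r+n+1}{k}=\binom{b+c+1}{c-s}$ suggests taking $c=r$ and $s=c-k=r-k$, so that $b+c+1=r+n+1$ forces $b=n$; then $\binom{b+s}{s}=\binom{n+r-k}{r-k}=\binom{n-k+r}{n}$ (using the symmetry $\binom{m}{j}=\binom{m}{m-j}$ for $v$-binomials), which is exactly the factor appearing in Lemma \ref{u_{n,t}sum}. This leaves the factor $\binom{a+s}{c}$, which must collapse to a power of $v$; choosing $a$ so that $a+s$ makes $\binom{a+s}{c}$ trivial is the natural move. With $b=n$, $c=r$, $s=r-k$, a good choice is $a$ such that $a-b-1$ is slightly negative relative to $c$, making the right-hand side $\binom{a-b-1}{r}$ a pure power of $v$ rather than a genuine binomial — concretely, $\binom{-1}{r}_v=(-1)^r v^{-r(r+1)/2}\cdot(\text{something})$, and more cleanly evaluating $\binom{m}{r}$ for $0\le m<r$ gives $0$, which is not what we want, so instead one wants $\binom{a-b-1}{r}$ with $a-b-1=r$ forcing $a=2n+r+1$, giving $\binom{r}{r}=1$ on the right — but then $\binom{a+s}{c}=\binom{2n+2r+1-k}{r}$ is not a power of $v$, so the correct route is the reverse: pick $a$ so that $\binom{a+s}{c}$ degenerates. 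The cleanest version uses the evaluation $\binom{a+s}{c}_v = v^{(a+s-c)(\ast)}$ only when $a+s-c$ is a boundary case; I expect the right choice to be $a+s=c$ at the top of the range, i.e. $a=c-s|_{s=0}$ adjusted, which after the reindexing $s=r-k$ becomes a clean affine substitution. I would simply carry out this bookkeeping carefully and confirm that the overall $v$-power prefactors, collected from the three $[n]_v$-to-$[n]_q$ conversions and from the degenerate binomial, combine to $v^{-r(n+r+1)}$.

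The main obstacle will be purely the power-of-$v$ bookkeeping: Lemma \ref{triple-binom} is stated and proved over $[n]_q$, while Lemma \ref{u_{n,t}sum} is stated over $[n]_v$, and each of the three $q$-binomial factors acquires a separate $q^{(1-\cdot)/2}$ fudge factor under the conversion, plus the symmetry rewrite $\binom{n+r-k}{r-k}_v=\binom{n+r-k}{n}_v$ is exact (no $v$-power) for $v$-binomials but would introduce one for pure $q$-binomials, so one must be consistent about which normalization is in force at each step. A secondary subtlety is the sign: the exponent $(-1)^{k-r}$ in Lemma \ref{u_{n,t}sum} versus $(-1)^s$ in Lemma \ref{triple-binom} must match under $s=r-k$, which it does since $(-1)^{r-k}=(-1)^{k-r}$. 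I would handle the $v$-power bookkeeping by writing every $v$-binomial in the product form $\binom{m}{j}_v=\prod_{t=1}^{j}\frac{v^{m-t+1}-v^{-(m-t+1)}}{v^{t}-v^{-t}}$ and tracking the leading and trailing powers, or — more efficiently — by evaluating both sides of the claimed identity at two or three special values of $n$ and $r$ to pin down the constant, then invoking Lemma \ref{triple-binom} for the structural part; but the cleanest writeup is the direct substitution, so that is what I would present.

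Alternatively, if the substitution into Lemma \ref{triple-binom} proves awkward, a self-contained fallback is available: transform to $q=v^2$ as in the proof of Lemma \ref{triple-binom}, clear denominators, and view both sides as elements of $\mathbb{Q}(q)[q^n]$ with $r$ fixed. Since $\binom{n-k+r}{n}_q$ is, for each $k$, a polynomial in $q^n$ of degree $r-k$, and these form a triangular (hence basis) system as $k$ ranges over $0,\dots,r$, the left-hand side is a polynomial in $q^n$ of degree $\le r$; one then checks it agrees with the right-hand side at $r+1$ values of $n$ (for instance $n=0,1,\dots,r$, where many binomials vanish or telescope), using the same evaluation-at-boundary-values technique that drives the proof of Lemma \ref{triple-binom}. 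This approach duplicates some of that lemma's reasoning but avoids any delicate index matching. Either way the identity is elementary; I expect the final writeup to be a single short paragraph once the correct specialization is identified.
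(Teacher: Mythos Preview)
Your proposal is a plan rather than a proof, and the central step of approach~1---identifying the value of $a$ that collapses Lemma~\ref{triple-binom} to the present identity---is never completed. You try several integer choices of $a$ and find none work; this is not an accident, because no integer $a$ makes $\binom{a+s}{r}$ a monomial in $v$ simultaneously for every $s\in\{0,\dots,r\}$. The correct move is to work with the polynomial form of the identity, Equation~\eqref{qtripsum}, regarded as an identity in $\mathbb{Q}(q)[q^a,q^b]$, and formally set $q^a=0$. Then $\binom{a+s}{c}_q\big|_{q^a=0}=\prod_{j=1}^{c}(1-q^j)^{-1}$ is independent of $s$, and the same constant appears in $\binom{a-b-1}{c}_q\big|_{q^a=0}$ on the right; cancelling it leaves precisely the two-binomial $q$-identity that, after converting back via $\binom{m}{j}_v=q^{j(j-m)/2}\binom{m}{j}_q$ and using the $v\mapsto v^{-1}$ symmetry of $v$-binomials, is Lemma~\ref{u_{n,t}sum}. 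So approach~1 is salvageable, but only through this formal specialization, which your write-up does not locate. Approach~2 (interpolation in $q^n$) is plausible but you likewise do not carry it out, so as written the proposal has a genuine gap.

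For comparison, the paper's argument is entirely different and does not invoke Lemma~\ref{triple-binom} at all. It proceeds by induction on $r$: one applies the Pascal identity $\binom{r+n+1}{k}=v^k\binom{r+n}{k}+v^{-r-n+k-1}\binom{r+n}{k-1}$ to split the sum into two pieces, shows the first piece vanishes by reducing it to the elementary identity $\sum_{k=0}^{r}(-1)^{k-r}v^{-k(r-1)}\binom{r}{k}=0$, and recognizes the second piece (after shifting the index) as exactly the $r-1$ case of the lemma times the correct power of $v$. This is short and self-contained. Your intended route through Lemma~\ref{triple-binom} would be more conceptual---it explains the identity as a degeneration of a known one---but it costs the $q^a=0$ trick and the $v$/$q$ bookkeeping you flagged, and in the end is no shorter.
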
 

\begin{proof}

	We use induction on $r$ to show this claim, where the identity for $r=0$ is obvious. 
	
	First we note for all $r, n \geq 0$, 
	
	\begin{equation}\label{=0} \sum_{k=0}^{r} v^{-kr}  (-1)^{k-r} \binom{n-k+r}{n} v^k \binom{r+n}{k}\end{equation}
	
	\begin{equation}\label{=02} \frac{[n+r]!}{[n]! [r]!}\sum_{k=0}^{r} v^{-k(r-1)}  (-1)^{k-r} \binom{r}{k} = 0, \end{equation}
	
	where the last equality follows from a well known quantum identity. 
	
	By the Pascal identity, which states 
	
	$$\binom{r+n+1}{k} = v^k \binom{r+n}{k} + v^{-r-n+k-1} \binom{r+n}{k-1},$$
	
	we see that the original summation identity is equivalent to considering 
	
	$$\sum_{k=1}^{r} v^{-kr} (-1)^{k-r} \binom{n-k+r}{n} v^{-r-n+k-1}\binom{r+n}{k-1}$$

	$$ =v^{-r-n-1}\sum_{k=1}^{r} v^{-k(r-1)} (-1)^{k-r} \binom{n-k+r}{n} \binom{r+n}{k-1}$$

	$$ =v^{-r-n-1}\sum_{j=0}^{r-1} v^{-(j+1)(r-1)} (-1)^{j+1-r} \binom{n-j+r-1}{n} \binom{r+n}{j}$$

	$$ =v^{-r-n-1 - (r-1)}\sum_{j=0}^{r-1} v^{-j(r-1)} (-1)^{j-(r-1)} \binom{n-j+r-1}{n} \binom{r+n}{j}$$
	
	$$ = v^{-r-n-1 - (r-1)} \cdot v^{-(r-1)(n+(r-1)+1)} = v^{(-r(n+r+1))},$$ where the penultimate equality follows from the inductive hypothesis.

\end{proof}

\begin{proof}[Proof of Lemma \ref{basicequality}]
	We have by Lemma \ref{moving-Fjs}
	
	$$\f_{i,l}\f_j^{l+1+n}.1 = b_{i,l}F_j^{(l+1+n)}  = \sum_{t=0}^{l} (-1)^{t+l} \binom{l+n-t}{n} F_j^{(l+n+1-t)} b_{i,l} F_j^{(t)}$$
	
	$$ = \sum_{t=0}^{l} (-1)^{t+l} \binom{l+n-t}{n} F_j^{(l+1+n-t)} \sum_{k=0}^{t} F_j^{(k)} z_{k,t}$$
	
	$$ = \sum_{t=0}^l \sum_{k=0}^t (-1)^{t+l} \binom{l+n-t}{n} F_j^{(l+1+n-t+k)} \binom{l+1+n-t+k}{k} z_{k,t}$$

	$$ = \sum_{s=n+1}^{l+1+n} F_j^{(s)} \sum_{k=0}^{s-n-1} (-1)^{l+1+n+k-s+l} \binom{l+n-(l+1+n+k-s)}{n} \binom{s}{k} z_{k,l+1+n+k-s}$$ 

	$$= \sum_{r  =0}^l F_j^{(r+n+1)} z_{0,l-r} \left( \sum_{k=0}^{r} v^{-kr} (-1)^{k-r} \binom{n-k+r}{n} \binom{r+n+1}{k} \right).$$ 
	
	By Lemma \ref{u_{n,t}sum}, the above equals
	$$\sum_{r  =0}^l v^{-r(n+r+1)}F_j^{(r+n+1)} z_{0,l-r}.$$ 
	
	Recall by Lemma \ref{inLinfty} that $z_{0,l-r} \in \L$ for $0 \leq r \leq \ell$. Since $-r(n+r+1) < 0$ for $0 < r \leq l$, we have $\f_{i,l}\f_j^{l+1+n}.1 \equiv F_j^{(n+1)}z_{0,l} = \f_j^{n+1}z_{0,l}$. By Lemma \ref{inLinfty}, $z_{0,l} \equiv \f_{i,l} \f_j^l .1 $, thus we have $\f_{i,l} \f_j^{l+1+n}.1 \equiv \f_j^{n+1}\f_{i,l} \f_j^l .1$. This is precisely the equality we want for $n=0$. If $n > 0$, then similarly we have $\f_{i,l} \f_j^{l+n}.1 \equiv \f_j^n \f_{i,l} \f_j^l.1$. Composing with another $\f_j$ on the left, we get $\f_j \f_{i,l} \f_j^{l+n}.1 \equiv \f_j^{n+1} \f_{i,l} f_j^l.1 \equiv \f_{i,l} \f_j^{l+1+n}.1$ as desired. 
\end{proof}

\subsection{Validity of Crystal Serre Relations}

Having proven that $\f_{i,l} \f_{j}^{l+1+n}.1 = \f_j \f_{i,l} \f_j^{l+n}.1$, we extend this equality to show for any $k \in \L$, $\f_j\f_{i,l}\f_j^l . k \equiv \f_{i,l}\f_j^{l+1} . k$ in $\B$. %so that the $\f_{i,l}$ satisfies the $l$-th order crystal Serre relation with $\f_j$. 

\begin{lemma}\label{opassoc}
	Let $z \in \mathcal{K}_j \subseteq U^{-}$ and let $x \in U^{-}$. Then $\f_j.(xz) = (\f_j. x)z$ and $\f_{i,l}.(xz) = (\f_{i,l} . x)z$ for all $l$.  	
\end{lemma}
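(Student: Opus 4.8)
The plan is to unwind the definitions of $\f_j$ and $\f_{i,l}$ via the direct-sum decompositions of Lemma \ref{decomp}, using the fact that multiplication on the right by an element $z \in \mathcal{K}_j$ respects those decompositions. First I would treat the real case $\f_j$. Write $x = \sum_{m \geq 0} F_j^{(m)} x_m$ with $x_m \in \mathcal{K}_j = \ker(e_j')$ as in Lemma \ref{decomp}. The key point is that $\mathcal{K}_j$ is closed under right multiplication by $z \in \mathcal{K}_j$: indeed, from property (1) of $e_j'$, $e_j'(x_m z) = e_j'(x_m) z + v^{(-j, |x_m|)} x_m e_j'(z) = 0$ since both $x_m$ and $z$ lie in $\ker(e_j')$. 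Hence $xz = \sum_{m \geq 0} F_j^{(m)} (x_m z)$ is already in the form prescribed by Lemma \ref{decomp}, and by uniqueness of that decomposition we read off $\f_j.(xz) = \sum_{m \geq 0} F_j^{(m+1)}(x_m z) = \big(\sum_{m \geq 0} F_j^{(m+1)} x_m\big) z = (\f_j.x) z$.

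Next I would handle the imaginary operators $\f_{i,l}$. Here one needs the analogue: $\mathcal{K}_i = \bigcap_{m \geq 1} \ker(e_{(i,m)}')$ is closed under right multiplication by $z \in \mathcal{K}_j$. This does not follow from $z \in \mathcal{K}_j$ alone in general; however, in the setting of the quiver $Q(\omega,r)$ the relevant statement is that right multiplication by $z \in \mathcal{K}_j$ preserves $\mathcal{K}_i$, which again follows from property (1): for $w \in \mathcal{K}_i$, $e_{(i,m)}'(wz) = e_{(i,m)}'(w) z + v^{(-(i,m),|w|)} w\, e_{(i,m)}'(z)$, and $e_{(i,m)}'(z) = 0$ because $z \in \mathcal{K}_j$ means $z$ is a sum of products of $b_\kappa$ with $\kappa \neq (i,m)$ after the appropriate decomposition — more precisely, I would argue that $\mathcal{K}_j$ as defined sits inside $\mathcal{K}_i$ for the single imaginary vertex of this quiver is not automatic, so instead I would decompose $x = \sum_{c \in \mathcal{C}_i} b_{i,c} x_c$ with $x_c \in \mathcal{K}_i$ and show directly that each $x_c z \in \mathcal{K}_i$. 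Granting that closure, $xz = \sum_{c} b_{i,c}(x_c z)$ is in the canonical form of Lemma \ref{decomp}, so by uniqueness $\f_{i,l}.(xz) = \sum_c b_{i,(l,c)} (x_c z) = \big(\sum_c b_{i,(l,c)} x_c\big) z = (\f_{i,l}.x) z$.

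The main obstacle is establishing that $\mathcal{K}_i$ is stable under right multiplication by elements of $\mathcal{K}_j$ — i.e. that applying $e_{(i,m)}'$ to $x_c z$ returns $0$. The cleanest route is to verify that $e_{(i,m)}'(z) = 0$ for every $z \in \mathcal{K}_j$: since $e_{(i,m)}'$ is a skew-derivation vanishing on all $b_\kappa$ with $\kappa \neq (i,m)$ and $e_{(i,m)}'(b_{i,m}) = 1$, and since $z \in \mathcal{K}_j = \ker e_j'$ by Lemma \ref{decomp} can be written without any leading $F_j$, one checks using the grading that $e_{(i,m)}'$ kills $\mathcal{K}_j$ by a degree/weight argument together with property (1) applied inductively on the length of a monomial expression for $z$. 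Once $e_{(i,m)}'(z)=0$ is in hand, the skew-Leibniz rule (1) gives $e_{(i,m)}'(x_c z) = e_{(i,m)}'(x_c) z = 0$ immediately, completing the argument; the real case is then just the specialization of the same mechanism to $e_j'$.
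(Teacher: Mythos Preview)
Your treatment of the real operator $\f_j$ is correct and matches the paper's proof: decompose $x$ via Lemma~\ref{decomp}, observe that $\mathcal{K}_j = \ker e_j'$ is closed under multiplication by the skew-derivation property~(1), and conclude by uniqueness of the decomposition.

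The imaginary case, however, has a genuine gap. Your plan rests on the claim that $\mathcal{K}_i$ is stable under right multiplication by elements of $\mathcal{K}_j$, and you propose to reach it by showing $e'_{(i,m)}(z)=0$ for all $z\in\mathcal{K}_j$. Both assertions are false. For a concrete counterexample take $z=b_{i,1}$: since $e_j'(b_{i,1}) = \delta_{j,(i,1)} = 0$ we have $b_{i,1}\in\mathcal{K}_j$, yet $e'_{(i,1)}(b_{i,1})=1$. Consequently $1\in\mathcal{K}_i$ and $b_{i,1}\in\mathcal{K}_j$, but $1\cdot b_{i,1}=b_{i,1}\notin\mathcal{K}_i$. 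Your proposed inductive ``degree/weight'' argument cannot succeed because it is attempting to prove a false statement; the membership $z\in\mathcal{K}_j$ imposes no constraint whatsoever on how the generators $b_{(i,m)}$ appear in $z$.

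The paper sidesteps this issue completely by noticing that in the non-isotropic situation under consideration ($\omega>1$) the operator $\f_{i,l}$ is \emph{literally} left multiplication by $b_{i,l}$. Indeed, from the definition for $i\notin I^{iso}$, if $u=\sum_{c\in\mathcal{C}_i}b_{i,c}z_c$ with $z_c\in\mathcal{K}_i$, then
\[
\f_{i,l}(u)=\sum_{c}b_{i,(l,c)}z_c=\sum_{c}b_{i,l}\,b_{i,c}\,z_c=b_{i,l}\cdot u.
\]
The identity $\f_{i,l}.(xz)=(\f_{i,l}.x)z$ then follows immediately from associativity of $U^-$, with no need to examine $\mathcal{K}_i$ at all.
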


\begin{proof}
 By Lemma \ref{decomp}, write $x$ as $\sum_{n} F_j^{(n)} z_n$ where all $z_n$ lie in $\mathcal{K}_j$. Then 
 \begin{equation}
 \label{trans}
 \f_j(xz) = \f_j . \sum_{n} F_j^{(n)} z_nz = \sum_{n} F_j^{(n+1)} z_nz,
 \end{equation} since $\mathcal{K}_j$ is closed under multiplication. But Equation \eqref{trans} 
 
 $$=\left(\sum_{n} F_j^{(n+1)} z_n \right) z = (\f_j . x) z.$$ 
 
The equality $\f_{i,l}(xz) = (\f_{i,l} . x)z$ follows from the associativity of $U^-$ as an algebra, as the left action of $\f_{i,l}$ is simply left multiplication by $b_{i,l}$. 	
\end{proof}

%\begin{corollary}
%	Let $k, k' \in \L$ such that $k . 1 = k' . 1$ in $\B$. Let $z \in \ker(e_j')$. Then $k . z = k' . z$.  
%\end{corollary}

%\begin{proof}
%	$z = 1 . z$. Thus $k . z = k . (1 . z ) = (k . 1) z = (k' . 1) z$.  
%\end{proof}

\begin{corollary}\label{rightmultstable}
	$\L$ is stable under right multiplication by elements in $\mathcal{K}_j \cap \L$. 
\end{corollary}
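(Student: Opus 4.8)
The plan is to show that $xz \in \L$ for every $x \in \L$ and every $z \in \mathcal{K}_j \cap \L$, arguing by induction on word length. Since $\L$ is by definition the sub-$\mathcal{A}$-module of $U^-$ spanned by the elements $\f_{\iota_1}\cdots\f_{\iota_s}.1$ with $s \geq 0$ and $\iota_k \in I_\infty$, and since right multiplication by a fixed $z$ is $\mathcal{A}$-linear, it is enough to prove $(\f_{\iota_1}\cdots\f_{\iota_s}.1)\,z \in \L$ for every such word; I would induct on $s$, the case $s = 0$ being exactly the hypothesis $z \in \L$.

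For the inductive step I would write $x = \f_{\iota_1}.x'$ with $x' = \f_{\iota_2}\cdots\f_{\iota_s}.1 \in \L$, so that $x'z \in \L$ by the inductive hypothesis. If $\iota_1 = j$ or $\iota_1 = (i,l)$ for some $l \geq 1$, then Lemma \ref{opassoc} applies directly and gives $xz = \f_{\iota_1}(x')\,z = \f_{\iota_1}(x'z)$; since $\L$, being spanned by words in the Kashiwara operators applied to $1$, is stable under each $\f_\iota$, we conclude $xz \in \L$. The only remaining configuration is $\iota_1 = j'$ for a real vertex $j' \neq j$, and this is where the work lies.

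The main obstacle is that Lemma \ref{opassoc} is stated only for the Kashiwara operator $\f_j$ attached to the same real vertex whose kernel contains $z$, whereas now we must commute right multiplication by $z$ past $\f_{j'}$ for a different real vertex $j'$, and $z$ is only assumed to lie in $\mathcal{K}_j$. I would address this by first noting that the proof of Lemma \ref{opassoc} uses nothing particular about $j$: expanding $x'$ along the $F_{j'}$-decomposition of Lemma \ref{decomp} and using that $\mathcal{K}_{j'}$ is closed under multiplication shows $\f_{j'}(x')\,z = \f_{j'}(x'z)$ \emph{provided} $z \in \mathcal{K}_{j'}$. It then suffices to verify that the relevant $z$ lies in $\mathcal{K}_{j'}$ for every real $j' \neq j$: for the quiver $Q(\omega,r)$, any homogeneous element of $U^-$ whose $\ints I$-degree is supported on $\{i,j\}$ is annihilated by $e'_{j'}$ for $j' \neq j$, since $e'_{j'}$ shifts $\ints I$-degree by $+j'$ and $U^-$ has no homogeneous component of positive $j'$-coordinate. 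This applies in particular to the elements $z^{\ell}_{0,c}$ of Definition \ref{zkc} --- which are the right multiplicands that occur in the sequel and which already lie in $\mathcal{K}_j \cap \L$ by Definition \ref{zkc} and Lemma \ref{inLinfty} --- so that they belong to $\bigcap_{j'' \in I^{re}} \mathcal{K}_{j''} \cap \L$; for such $z$ every $\f_\iota$ commutes with right multiplication by $z$, and the induction closes without any restriction on $\iota_1$.
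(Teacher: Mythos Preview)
Your overall plan matches the paper's: reduce by $\mathcal{A}$-linearity to words $\f_\gamma.1$ and use Lemma~\ref{opassoc} to pull the Kashiwara operators past $z$. You are right that Lemma~\ref{opassoc}, as stated, covers only $\f_j$ and $\f_{i,l}$, not $\f_{j'}$ for a different real vertex; the paper's own proof invokes it for an arbitrary word $\gamma$ without comment, so you have in fact located a genuine gap that the paper glosses over.

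Your resolution, however, does not prove the Corollary as stated, and it cannot: when $r \geq 2$ the statement is false. Take $j = j_1$, $z = F_{j_2} = \f_{j_2}.1 \in \mathcal{K}_{j_1} \cap \L$, and $x = F_{j_2} \in \L$; then $xz = F_{j_2}^2 = [2]\,\f_{j_2}^2.1$, and since $[2] = v + v^{-1} \notin \mathcal{A}$ while $\f_{j_2}^2.1$ is part of an $\mathcal{A}$-basis of $\L$, one has $xz \notin \L$. What your argument actually establishes is the weaker claim for $z \in \bigcap_{j'' \in I^{re}} \mathcal{K}_{j''} \cap \L$, which is correct. Your assertion that this weaker version suffices because the right multiplicands in the sequel are the $z_{0,c}^{\ell}$ is also inaccurate: in the theorem establishing the crystal Serre relation, the right factor is a component $z_n$ from the $F_j$-decomposition of an arbitrary $k \in \L$, which need not lie in $\mathcal{K}_{j'}$ for $j' \neq j$. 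What does rescue that application is that the \emph{left} factor $\alpha = \f_{i,l}\f_j^{l+n+1}.1 - \f_j\f_{i,l}\f_j^{l+n}.1$ has $\ints I$-degree supported on $\{i,j\}$; since $\L$ is $\ints I$-graded, $\alpha$ is a $v^{-1}\mathcal{A}$-combination of $\f_\gamma.1$ with $\gamma$ a word in $j$ and the $(i,l)$'s alone, and for such words Lemma~\ref{opassoc} does give $(\f_\gamma.1)z_n = \f_\gamma(z_n) \in \L$.
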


\begin{proof}
	Let $k \in \L$ and $z \in \mathcal{K}_j \cap \L$. Decomposing $k$ into the sum of sequences of Kashiwara operators applied to 1, it suffices to show the statement holds for $\f_\gamma.1$ where $\gamma$ is a sequence of elements of $I_\infty$ and $\f_\gamma$ denotes the associated sequence of Kashiwara operators. By Lemma \ref{opassoc}, we can write $(\f_\gamma.1)z = \f_\gamma(z)$, which is in $\L$ since $\L$ is stable under Kashiwara operators.

	% We can write $k = \sum_{\iota} c_{\iota} \f_\iota . 1 $ and $z = \sum_{\gamma} c_\gamma \f_\gamma . 1$. It suffices to show that for arbitrary $\iota$ and $\gamma$ that $(\f_{\iota} . 1) . (\f_\gamma . 1) \in \L$. Since $\f_\gamma . 1 \in \mathcal{K}_j$, we know by Lemma \ref{opassoc} that  $(\f_{\iota} . 1) . (\f_\gamma . 1)  = \f_{\iota} (1 . \f_\gamma . 1) = \f_\iota . \f_\gamma . 1  \in \L$. 
\end{proof}

\begin{corollary}\label{partinL}
	When $z \in \L$ is written as $\sum_{n=0}^{k} F_j^{(n)} z_n$ for $z_n \in \mathcal{K}_j$, then in fact $z_n \in \L$ for all $n$. 
\end{corollary}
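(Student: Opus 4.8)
The plan is to extract each component $z_n$ from $z$ using only the Kashiwara operators $\e_j$ and $\f_j$, and then appeal to the stability of $\L$ under those operators. Recall from Section \ref{2.2} that, relative to the decomposition $U^- = \bigoplus_{l \geq 0} F_j^{(l)}\mathcal{K}_j$ of Lemma \ref{decomp}, the operator $\e_j$ sends $\sum_{l} F_j^{(l)} w_l$ to $\sum_{l \geq 1} F_j^{(l-1)} w_l$ and $\f_j$ sends $\sum_{l} F_j^{(l)} w_l$ to $\sum_{l \geq 0} F_j^{(l+1)} w_l$, whenever the $w_l$ lie in $\mathcal{K}_j$.

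First I would compute, for $z = \sum_{m=0}^{k} F_j^{(m)} z_m$ with $z_m \in \mathcal{K}_j$, that $\e_j^{\,n}(z) = \sum_{m \geq n} F_j^{(m-n)} z_m$ for every $n \geq 0$, and therefore $\f_j\, \e_j^{\,n+1}(z) = \sum_{m \geq n+1} F_j^{(m-n)} z_m$. Subtracting yields the identity $z_n = \e_j^{\,n}(z) - \f_j\, \e_j^{\,n+1}(z)$, a finite expression built entirely from $\e_j$ and $\f_j$ applied to $z$.

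Finally I would invoke the fact --- part of Bozec's Theorem 3.26 recalled above, and already used in the proofs of Lemma \ref{inLinfty} and Corollary \ref{rightmultstable} --- that $\L$ is stable under all the Kashiwara operators, in particular under $\e_j$ and $\f_j$. Since $z \in \L$, the identity of the previous paragraph immediately gives $z_n \in \L$ for all $n$, completing the proof.

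The argument is short and I do not anticipate a genuine obstacle; the only points requiring any care are the index bookkeeping in the first step --- making the shifts in $\e_j^{\,n}$ and $\f_j\, \e_j^{\,n+1}$ line up so that precisely the $m = n$ term survives --- and the appeal to stability of $\L$, which we take as given rather than reprove.
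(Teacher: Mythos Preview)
Your proof is correct and uses essentially the same idea as the paper: extract the components $z_n$ from $z$ using only $\e_j$ and $\f_j$, then invoke the stability of $\L$ under these operators. The paper phrases it as an induction on $k$, peeling off the top term via $z_k = \e_j^{\,k}(z)$ and then recursing on $z - F_j^{(k)} z_k = z - \f_j^{\,k}\e_j^{\,k}(z)$, whereas you give the closed formula $z_n = \e_j^{\,n}(z) - \f_j\,\e_j^{\,n+1}(z)$ directly; both arguments rest on the same observation and the same appeal to stability.
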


\begin{proof}
	We use induction on $k$. If $k=0$, then clearly $z = z_0 \in \L$. 
	
	For the inductive step, we consider $\tilde{e}_j^k . \sum_{n=0}^k F_j^{(n)} z_n = z_k \in \L$, as $\L$ is stable under $\e_j$. The result follows by applying the inductive hypothesis to $z - F_j^{(k)} z_k$. %hen $z - z_k = \sum_{n=0}^{k-1} F_j^{(n)} z_n  \in \L$, which is a sum with $k-1$ terms so the inductive hypothesis applies and the lemma follows. 
\end{proof}

\begin{theorem}
	For all $k \in \L$, $\f_{i,l} \f_j^{l+1} . k \equiv \f_j \f_{i,l} \f_j^{l} . k$ in $\B$. 
\end{theorem}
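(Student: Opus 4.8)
The plan is to bootstrap from the case $k=1$, which is exactly Lemma \ref{basicequality}, by decomposing $k$ along the direct sum $U^- = \bigoplus_{n\geq 0} F_j^{(n)}\mathcal{K}_j$ of Lemma \ref{decomp} and exploiting the fact that both $\f_j$ and $\f_{i,l}$ interact well with right multiplication by elements of $\mathcal{K}_j$.

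First I would write $k = \sum_{n\geq 0} F_j^{(n)} z_n$, a finite sum with $z_n \in \mathcal{K}_j$, and invoke Corollary \ref{partinL} to conclude that in fact each $z_n$ lies in $\L$. Since $1 \in \mathcal{K}_j$ we have $\f_j^m.1 = F_j^{(m)}$ for all $m$, and from the definition of $\f_j$ on the decomposition of Lemma \ref{decomp} it follows inductively that $\f_j^m.k = \sum_n F_j^{(m+n)} z_n$ for every $m \geq 0$. I then apply $\f_{i,l}$ (which is simply left multiplication by $b_{i,l}$) and $\f_j$, using Lemma \ref{opassoc} repeatedly to slide these operators past the right factors $z_n \in \mathcal{K}_j$; after checking that at each stage the element is still presented as a sum of terms of the shape $(\text{something in }U^-)\cdot z_n$, this yields
\[
\f_{i,l}\f_j^{l+1}.k \;=\; \sum_n \big(\f_{i,l}\f_j^{\,l+1+n}.1\big)\, z_n, \qquad \f_j\f_{i,l}\f_j^{l}.k \;=\; \sum_n \big(\f_j\f_{i,l}\f_j^{\,l+n}.1\big)\, z_n .
\]

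By Lemma \ref{basicequality} (with $\ell = l$) the two bracketed elements have the same image in $\B$ for each $n$, i.e.\ their difference lies in $v^{-1}\L$; writing that difference as $v^{-1}w_n$ with $w_n \in \L$, the corresponding difference of summands equals $v^{-1}(w_n z_n)$, and $w_n z_n \in \L$ by Corollary \ref{rightmultstable} since $z_n \in \mathcal{K}_j \cap \L$. Summing over the finitely many $n$, and noting that both sides of the claimed identity lie in $\L$ because $\L$ is stable under all Kashiwara operators, we obtain $\f_{i,l}\f_j^{l+1}.k \equiv \f_j\f_{i,l}\f_j^{l}.k$ in $\B$. The only delicate point is the bookkeeping in the displayed rewriting — making sure Lemma \ref{opassoc} is applied to genuine products $x z_n$ with $z_n \in \mathcal{K}_j$ at each step, and tracking the $F_j$-divided-power indices correctly — together with the observation that congruence modulo $v^{-1}\L$ survives the right multiplications by the $z_n$, which is precisely the role of Corollary \ref{rightmultstable}. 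I expect no new idea to be needed beyond what is already in place.
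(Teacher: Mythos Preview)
Your proposal is correct and follows essentially the same approach as the paper: decompose $k$ via Lemma~\ref{decomp}, use Corollary~\ref{partinL} to get $z_n\in\L\cap\mathcal{K}_j$, apply Lemma~\ref{opassoc} to reduce to the case $k=1$ (shifted to $\f_j^{l+n}$), invoke Lemma~\ref{basicequality}, and use Corollary~\ref{rightmultstable} to ensure the error terms stay in $v^{-1}\L$. The only difference is cosmetic---you display the two rewritings explicitly and factor the difference as $v^{-1}w_n$ before appealing to Corollary~\ref{rightmultstable}, whereas the paper applies that corollary directly to $\alpha z_n$.
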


\begin{proof}
	
	By Lemma \ref{basicequality}, we know $\f_{i,l} \f_j^{l+n+1} . 1 \equiv \f_j \f_{i,l} \f_j^{l+n} . 1$ in $\B$ for any $n \geq 0$. Thus we can write $\f_{i,l} \f_j^{l+n+1} . 1 \equiv \f_j \f_{i,l} \f_j^{l+n} . 1 + \alpha$ where $\alpha \in v^{-1} \L$. 
	
	Writing $k$ as $\sum_{n} F_j^{(n)} z_n$ for $z_n \in \mathcal{K}_j$, by linearity it suffices to show that $\f_{i,l} \f_j^{l+1} . F_j^{(n)} z_n \equiv \f_j \f_{i,l} \f_j^{l} . F_j^{(n)} z_n $ in $\B$. This is equivalent to showing that for any $n$, $\f_{i,l} \f_j^{l+n+1} . z_n \equiv \f_j \f_{i,l} \f_j^{l+n} . z_n$. Since $z_n \in \mathcal{K}_j$, by Lemma \ref{opassoc}, $\f_{i,l} \f_j^{l+n+1} . z_n = (\f_{i,l} \f_j^{l+n+1} . 1) z_n  = (\f_j \f_{i,l} \f_j^{l+n}.1 + \alpha) z_n = \f_j \f_{i,l} \f_j^{l+n} . z_n + \alpha z_n$. By Lemma \ref{partinL}, $z_n \in \L \cap \mathcal{K}_j$. Thus by Lemma \ref{rightmultstable}, $\alpha z_n \in v^{-1}\L$ so $\alpha  z \equiv 0$. Thus $\f_{i,l} \f_j^{l+n+1} . z_n \equiv \f_j \f_{i,l} \f_j^{l+n} . z_n$ in $\B$, and the theorem follows.
\end{proof}

\subsection{Kashiwara Operators for Non-adjacent Vertices Commute}

\begin{lemma}\label{js-commute}
	
	Let $\iota, \iota' \in I_\infty$ be such that $(\iota, \iota')=0$. Then for all $u \in U^-$, $\f_{\iota} \f_{\iota'} . u= \f_{\iota'} \f_\iota . u$. 
\end{lemma}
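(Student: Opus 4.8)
The plan is to split according to whether $\iota$ and $\iota'$ govern the same decomposition of $U^-$ from Lemma~\ref{decomp}. Set $a(j)=j$ for $j\in I^{re}$ and $a((i,l))=i$ for $i\in I^{im}$, so that $\f_\iota$ acts through the $a(\iota)$-decomposition. If $a(\iota)=a(\iota')$, then either $\iota=\iota'$, in which case there is nothing to prove, or $\iota=(i,l)$ and $\iota'=(i,l')$ with $l\ne l'$; here $(\iota,\iota')=ll'(i,i)=0$ forces $(i,i)=0$, so $i$ is isotropic. In that subcase I would expand $u=\sum_\lambda b_{i,\lambda}z_\lambda$ with $z_\lambda\in\mathcal{K}_i$ and compute both orders directly from the isotropic formulas of Section~\ref{2.2}: since $l\ne l'$, adjoining a part of size $l'$ to a partition leaves $m_l$ unchanged and conversely, so $\f_{i,l}\f_{i,l'}.u$ and $\f_{i,l'}\f_{i,l}.u$ both equal $\sum_\lambda\sqrt{\tfrac{l}{m_l(\lambda)+1}}\sqrt{\tfrac{l'}{m_{l'}(\lambda)+1}}\,b_{i,\lambda\cup l\cup l'}z_\lambda$.

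The substantive case is $a:=a(\iota)\ne a':=a(\iota')$. By bilinearity of the extended form, $(\iota,\iota')=0$ forces $(\mu,\nu)=0$ for \emph{every} $\mu,\nu\in I_\infty$ with $a(\mu)=a$ and $a(\nu)=a'$; hence all such $b_\mu$ and $b_\nu$ commute in $U^-$ by the commutation relations of Section~\ref{U-}, and therefore so do the monomials $P_a(\xi)$ and $P_{a'}(\xi')$ appearing in the two decompositions of Lemma~\ref{decomp} (where $\xi$ ranges over $\nats$ resp.\ $\mathcal{C}_i$ and $P_a(\xi)$ is $F_j^{(l)}$ resp.\ $b_{i,c}$). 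A short skew-Leibniz bookkeeping then gives two facts: (i) for $a(\mu)=a$ and $a(\nu)=a'$ one has $e'_\mu(b_\nu y)=e'_\mu(b_\nu)y+v^{(-\mu,|b_\nu|)}b_\nu e'_\mu(y)=b_\nu e'_\mu(y)$, since $e'_\mu(b_\nu)=\delta_{\mu\nu}=0$ and the exponent is $(\mu,\nu)=0$, so $\mathcal{K}_a$ is stable under left multiplication by every $P_{a'}(\xi')$; and (ii) $e'_\mu$ and $e'_\nu$ commute whenever $a(\mu)=a$, $a(\nu)=a'$, because $e'_\mu e'_\nu$ and $e'_\nu e'_\mu$ obey the same second-order skew-Leibniz identity (the weight shift in the exponents is again absorbed by the vanishing pairing), both vanish on $1$, and both vanish on every algebra generator $b_\kappa$.

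Combining (i), (ii) with the uniqueness clause of Lemma~\ref{decomp} shows that the $a'$-decomposition of any $x\in\mathcal{K}_a$ has all components back in $\mathcal{K}_a$: applying $e'_\mu$ with $a(\mu)=a$ to $x=\sum_{\xi'}P_{a'}(\xi')y_{\xi'}$ (with $y_{\xi'}\in\mathcal{K}_{a'}$) and using (i),(ii) gives $\sum_{\xi'}P_{a'}(\xi')e'_\mu(y_{\xi'})=0$ with each $e'_\mu(y_{\xi'})\in\mathcal{K}_{a'}$, hence $e'_\mu(y_{\xi'})=0$. So $\mathcal{K}_a=\bigoplus_{\xi'}P_{a'}(\xi')(\mathcal{K}_a\cap\mathcal{K}_{a'})$, and feeding this into Lemma~\ref{decomp} for $a$ yields the simultaneous refinement
$$U^-=\bigoplus_{\xi,\xi'}P_a(\xi)\,P_{a'}(\xi')\,(\mathcal{K}_a\cap\mathcal{K}_{a'}).$$
Writing $u=\sum_{\xi,\xi'}P_a(\xi)P_{a'}(\xi')y_{\xi,\xi'}$ with $y_{\xi,\xi'}\in\mathcal{K}_a\cap\mathcal{K}_{a'}$, the inner $\xi'$-sum lies in $\mathcal{K}_a$, so reading off the $a$-decomposition and applying the definition of $\f_\iota$ gives $\f_\iota.u=\sum_{\xi,\xi'}c_\iota(\xi)\,P_a(\xi^+)P_{a'}(\xi')y_{\xi,\xi'}$, where $\xi^+$ is the raised index and the scalar $c_\iota(\xi)$ (equal to $1$ outside the isotropic case and to $\sqrt{l/(m_l(\lambda)+1)}$ inside it) depends only on $\xi$. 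Since $P_a(\xi^+)$ commutes with $P_{a'}(\xi')$ and $\mathcal{K}_{a'}$ is stable under left multiplication by $P_a(\xi^+)$, this expression is already in standard form for the $a'$-decomposition, so $\f_{\iota'}$ raises the $\xi'$-index with a scalar $c_{\iota'}(\xi')$ depending only on $\xi'$; hence $\f_{\iota'}\f_\iota.u=\sum c_\iota(\xi)c_{\iota'}(\xi')P_a(\xi^+)P_{a'}((\xi')^+)y_{\xi,\xi'}$. Running the same computation with the roles of $\iota$ and $\iota'$ exchanged produces the identical sum, because the scalars and the $P$'s commute, so $\f_\iota\f_{\iota'}.u=\f_{\iota'}\f_\iota.u$.

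I expect the main obstacle to be the structural statement $U^-=\bigoplus_{\xi,\xi'}P_a(\xi)P_{a'}(\xi')(\mathcal{K}_a\cap\mathcal{K}_{a'})$ — namely the two skew-Leibniz assertions (i), (ii) with their careful tracking of weights and pairings, and then the bootstrap from Lemma~\ref{decomp} through its uniqueness clause. Everything afterward is a formal manipulation in the resulting coordinates.
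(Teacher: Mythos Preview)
Your proof is correct, and its core is the same pair of facts the paper uses in the both-real case: that the skew-derivations $e'_\mu,e'_\nu$ commute when the underlying vertices are orthogonal (your (ii), the paper's Lemma~\ref{ej'ek'commute}), and that the components of an element of $\mathcal{K}_a$ in the $a'$-decomposition again lie in $\mathcal{K}_a$ (your bootstrap to the simultaneous decomposition, the paper's Lemma~\ref{fin}). The difference is organizational. The paper treats the three subcases separately: when both indices are imaginary it invokes that the non-isotropic $\f_{i,l}$ is literally left multiplication by $b_{i,l}$; when one index is imaginary and one real it again uses left multiplication together with a single $\mathcal{K}_j$-decomposition; only in the both-real case does it build the simultaneous decomposition. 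You instead run the simultaneous-decomposition argument uniformly for every pair with $a(\iota)\ne a(\iota')$, which is cleaner and also covers mixed real/imaginary and imaginary/imaginary pairs without appeal to the special form of $\f_{i,l}$. Your separate treatment of the isotropic subcase $a(\iota)=a(\iota')$ is a small bonus the paper does not need in its setting ($\omega>1$). One point to make explicit when you write it out: the commutation $[b_\mu,b_\nu]=0$ follows from the relations $[F_\kappa,F_{\kappa'}]=0$ of Section~\ref{U-} because each $b_{i,l}$ lies in the subalgebra generated by the $F_{i,k}$; you use this when you assert that $P_a(\xi)$ and $P_{a'}(\xi')$ commute.
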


First consider the case where both $\iota, \iota' \in I^{im}$. Since $(\iota, \iota')=0$, $[b_\iota, b_{\iota'}]=0$, and the lemma follows trivially from the fact that the action of $\f_\iota$ and $\f_{\iota'}$ are simply left multiplication by $b_{\iota}$ and $b_{\iota'}$ respectively. If exactly one of $\iota, \iota' \in I^{im}$, without loss of generality assume $\iota \in I^{im}$ and let $\iota' := j \in I^{re}$. Then write $u =\sum F_j^{(n)}z_n$ for $z_n \in  K_j$. Then 
$$\f_\iota \f_j. u =  \sum b_\iota F_j^{(n+1)}z_n =  \sum F_j^{(n+1)} b_\iota z_n,$$ because $[F_j, b_\iota]=0$ as $(\iota,j)=0$. Also, $b_\iota z_n \in K_j$ because $K_j$ is a subalgebra of $U^-$, and thus the above summation equals $$\sum F_j^{(n+1)} b_\iota z_n = \f_j \sum F_j^{(n)} b_\iota z_n = \f_j  \f_\iota. u.$$

It remains to prove Lemma \ref{js-commute} for the case where $\iota, \iota' \in I^{re}$. For the rest of this section, let $\iota = j, \iota' = k$ where $j, k \in I^{re}$. 
\begin{lemma}\label{ej'ek'commute}
	$[e_j', e_k'] = 0$.
\end{lemma}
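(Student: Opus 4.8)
The plan is to exploit the defining characterization of the operators $e_\iota'$ given in Proposition 3.14 of \cite{Bo2} (reproduced above): $e_\iota'$ is the unique $\cplx(v)$-linear map satisfying the twisted Leibniz rule $(1)$ together with the normalization $e_\iota'(b_\kappa) = \delta_{\iota,\kappa}$ on the algebra generators. Since $j, k \in I^{re}$ are real and non-adjacent, we have $(j,k) = -n_{jk} = 0$, so $F_j$ and $F_k$ commute in $U^-$ and the twist exponents $v^{(-j,|y|)}$ and $v^{(-k,|y|)}$ interact cleanly. First I would observe that $e_j'$ and $e_k'$ are both graded operators (of degrees $+j$ and $+k$ respectively), and that $U^-$ is generated as an algebra by $\{b_\iota : \iota \in I_\infty\}$; hence it suffices to check $[e_j', e_k'] = 0$ on products of generators, and by an induction on word length it suffices to verify the commutator vanishes on each generator $b_\iota$ and is compatible with multiplication.

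Concretely, I would compute $e_j' e_k'(yz)$ and $e_k' e_j'(yz)$ by applying rule $(1)$ twice. Expanding $e_j'(e_k'(yz)) = e_j'(e_k'(y)z + v^{(-k,|y|)} y e_k'(z))$ and again applying $(1)$, one gets a sum of four terms: $e_j' e_k'(y) z$, $v^{(-j, |y|-k)} e_k'(y) e_j'(z)$, $v^{(-k,|y|)} e_j'(y) e_k'(z)$, and $v^{(-k,|y|)} v^{(-j,|y|)} y e_j' e_k'(z)$, using that $|e_k'(y)| = |y| - k$ so the twist picks up $v^{(-j, |y|-k)} = v^{(-j,|y|)} v^{(-j,-k)} = v^{(-j,|y|)}$ since $(j,k) = 0$. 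Performing the same expansion for $e_k' e_j'(yz)$ yields exactly the same four terms — the two ``cross'' terms match because $v^{(-j,|y|-k)} = v^{(-j,|y|)}$ and $v^{(-k,|y|-j)} = v^{(-k,|y|)}$, again thanks to $(j,k) = 0$, and the symmetry of the middle two terms is manifest. Therefore $[e_j', e_k']$ satisfies the \emph{untwisted} Leibniz rule $[e_j',e_k'](yz) = [e_j',e_k'](y) z + y [e_j',e_k'](z)$, i.e.\ it is a derivation of $U^-$.

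It then remains to check that this derivation kills the generators. For $\iota = j$: $e_k'(b_j) = e_k'(F_j) = \delta_{k,j} = 0$ since $j \neq k$, so $e_j' e_k'(b_j) = 0$; and $e_j'(b_j) = 1$, a scalar, on which $e_k'$ vanishes (it is a graded operator of nonzero degree $+k$, or directly $e_k'(1) = 0$), so $e_k' e_j'(b_j) = 0$. The symmetric computation handles $\iota = k$. For $\iota = (i,l)$ with $i$ the imaginary vertex: $e_k'(b_{i,l}) = e_k'(b_{(i,l)}) = \delta_{k,(i,l)} = 0$ and likewise $e_j'(b_{i,l}) = 0$, so both compositions annihilate $b_{i,l}$. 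A derivation vanishing on a generating set vanishes identically, so $[e_j', e_k'] = 0$ on all of $U^-$. The only mild subtlety — and the one step I would be careful to spell out — is the bookkeeping of the twist exponents to confirm the cross-terms cancel; this is exactly where the hypothesis $(j,k) = 0$ is used, and it is genuinely the crux of the argument rather than the derivation-kills-generators step, which is routine.
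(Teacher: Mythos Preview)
Your argument is essentially the paper's own proof: both reduce to checking the commutator on products of the generators $b_\iota$ by applying the twisted Leibniz rule twice and using $(j,k)=0$ so that the cross terms match, then handle the base case on single generators by the normalization $e_\iota'(b_\kappa)=\delta_{\iota\kappa}$.

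One small slip worth correcting: the commutator $[e_j',e_k']$ does \emph{not} satisfy the untwisted Leibniz rule. After your subtraction the two cross terms cancel exactly as you say, but the surviving ``last'' term carries the twist $v^{(-j,|y|)}v^{(-k,|y|)}$, so what you actually get is
\[
[e_j',e_k'](yz) \;=\; [e_j',e_k'](y)\,z \;+\; v^{(-j-k,\,|y|)}\,y\,[e_j',e_k'](z).
\]
(Relatedly, $|e_k'(y)| = |y| + k$, not $|y|-k$, though this is harmless since $(j,k)=0$.) This does not affect your conclusion: a twisted derivation vanishing on an algebra generating set still vanishes identically, by the same induction on word length.
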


\begin{proof}[Proof of Lemma \ref{ej'ek'commute}]
	
	It suffices to show that $e_j'(e_k' (b_{\iota_1, \ldots, \iota_n})) = e_k'(e_j' (b_{\iota_1, \ldots, \iota_n}))$ for any sequence $(\iota_1, \ldots, \iota_n)$ where all $\iota_t \in I_\infty$ and $\iota_t= (i_t, c_t)$ for $i_t\in I$, where $b_{\iota_1, \ldots, \iota_n} = b_{\iota_1} \cdots b_{\iota_n}$.
	
	Denote $b_{\iota_2, \ldots, \iota_n}$ by $\alpha$. Then by Property (1) of $e_j'$ and $e_k'$ mentioned in Section \ref{2.2},
	
	$$e_j'(e_k' (b_{\iota_1, \ldots, \iota_n})) = e_j' (e_k' (b_{\iota_1}) \alpha + v^{(-k, -c_1 i_1 )} b_{\iota_1} e_k'(\alpha ) ) = e_j'(e_k' (b_{\iota_1}) \alpha) + v^{(-k, -c_1 i_1)}  e_j'(b_{\iota_1} e_k'(\alpha )  )$$
	
	$$ = e_j'(e'_k(b_{\iota_1})) \alpha + v^{(-j,|e_k'(b_{\iota_1})|)}e_k'(b_{\iota_1}) e_j'(\alpha) + v^{(-k, -c_1 i_1)} \left( e_j'(b_{\iota_1})e_k'(\alpha) + v^{(-j, -c_1i_1)}b_{\iota_1}e_j'(e_k'(\alpha))\right) $$
	
	Since $e_k'$ is a homogeneous operator of degree $k$, we have that $|e_k'(b_{\iota_1})| = -c_1i_1 + k$. Thus $(-j, -c_1i_1+k) = (-j, -c_1i_1)$ since $(j,k)=0$.
	
	%We know that $v^{(j,|e_k'(F_{c_1}))} = v^{(j, -c_1)}$, thus the above is symmetric in $j$ and $k$ and thus $e_j'$ and $e_k'$ commute.

	The middle two terms of the expression are symmetric in $j$ and $k$ because $(j,k)=0$. $e_j'(e_k'(b_{\iota_1})) = e_k'(e_j'(b_{\iota_1}))$ for degree reasons. The last term is symmetric in $j$ and $k$ because we can assume by induction on $n$, the length of the product, that $e_j'$ and $e_k'$ commute on $\alpha$. Thus the entire expression is symmetric in $j$ and $k$, and thus $e_j'$ and $e_k'$ commute.
\end{proof}

It follows that $\mathcal{K}_j$ is stable under the $e_k'$ operator and $\mathcal{K}_{k}$ is stable under the $e_j'$ operator.

\begin{lemma}\label{fin}
%	Continuing the notation in Lemma \ref{2.2}, 
	
Given $z \in \mathcal{K}_k$, writing $z = \sum_{n=0}^d F_{j}^{(n)} z_n$ where $z_n \in \mathcal{K}_{j}$, then $z_n \in \mathcal{K}_k$. 
\end{lemma}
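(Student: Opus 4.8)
The plan is to apply the operator $e_k'$ to the given decomposition $z = \sum_{n=0}^{d} F_j^{(n)} z_n$ and then invoke the uniqueness of the decomposition in Lemma \ref{decomp} together with the commutation $[e_j', e_k'] = 0$ from Lemma \ref{ej'ek'commute}; recall that $(j,k) = 0$ throughout this section.

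First I would record that $e_k'$ kills every divided power of $F_j$. Since $e_k'(F_j) = e_k'(b_j) = \delta_{k,j} = 0$ (as $j \neq k$), Property (1) of $e_k'$ gives inductively $e_k'(F_j^{m}) = e_k'(F_j) F_j^{m-1} + v^{(-k,-j)} F_j\, e_k'(F_j^{m-1}) = 0$, and dividing by $[m]!$ yields $e_k'(F_j^{(n)}) = 0$ for all $n$. Moreover $|F_j^{(n)}| = -nj$ and $(j,k) = 0$, so the scalar $v^{(-k,\,|F_j^{(n)}|)} = v^{n(k,j)}$ equals $1$; hence Property (1) collapses to
$$e_k'\big(F_j^{(n)} z_n\big) = e_k'(F_j^{(n)}) z_n + v^{(-k,\,-nj)} F_j^{(n)}\, e_k'(z_n) = F_j^{(n)}\, e_k'(z_n).$$

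Summing over $n$ and using $e_k'(z) = 0$ (since $z \in \mathcal{K}_k$), I get $\sum_{n=0}^{d} F_j^{(n)}\, e_k'(z_n) = 0$. By Lemma \ref{ej'ek'commute} each $e_k'(z_n)$ lies in $\mathcal{K}_j$, because $e_j'(e_k'(z_n)) = e_k'(e_j'(z_n)) = 0$. Thus the last display is an equality inside the direct sum $\bigoplus_{n\geq 0} F_j^{(n)}\mathcal{K}_j$ of Lemma \ref{decomp}, so uniqueness forces $F_j^{(n)}\, e_k'(z_n) = 0$ for every $n$. Finally, left multiplication by $F_j^{(n)}$ is injective on $\mathcal{K}_j$ — applying $\e_j^n$ recovers the original element — so $e_k'(z_n) = 0$, i.e.\ $z_n \in \mathcal{K}_k$, which is the claim.

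I do not anticipate a genuine obstacle; the main subtlety worth stating carefully is the injectivity of multiplication by $F_j^{(n)}$ on $\mathcal{K}_j$ (equivalently, that $F_j^{(n)} w = 0$ with $w \in \mathcal{K}_j$ forces $w = 0$), which is immediate either from the direct-sum structure of Lemma \ref{decomp} or from the identity $\e_j^n(F_j^{(n)} w) = w$.
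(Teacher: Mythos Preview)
Your argument is correct, but it takes a route different from the paper's. The paper inducts on $d$: it applies $e_j'^{\,d}$ to $z$, observes (via $[e_j',e_k']=0$) that $e_j'^{\,d}$ preserves $\mathcal{K}_k$, and uses the skew-derivation property to see $e_j'^{\,d}(z)=c\,z_d$ for some nonzero $c\in\cplx(v)$; hence $z_d\in\mathcal{K}_k$, and the rest follows by induction after subtracting $F_j^{(d)}z_d$. You instead apply $e_k'$ once to the whole decomposition, use $(j,k)=0$ to pass $e_k'$ past each $F_j^{(n)}$, and then invoke uniqueness in $\bigoplus_n F_j^{(n)}\mathcal{K}_j$ (plus injectivity of left multiplication by $F_j^{(n)}$ on $\mathcal{K}_j$) to kill all $e_k'(z_n)$ simultaneously. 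Your approach is slightly slicker in that it avoids the induction and the computation of $e_j'^{\,d}$ on a product; the paper's approach, on the other hand, does not need the small side remark about injectivity of $F_j^{(n)}\cdot$ on $\mathcal{K}_j$. Both rest on the same commutation lemma, just used in the two dual directions ($e_j'$ preserving $\mathcal{K}_k$ versus $e_k'$ preserving $\mathcal{K}_j$).
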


\begin{proof}
	By induction on $d$, it suffices to check $z_d \in \mathcal{K}_k$. because $F_j$, and then also $F_j^{(d)}z_d$, is in $K_k$ It follows from Lemma \ref{ej'ek'commute} that $e_j'^d (z) \in \mathcal{K}_k$. But it is also clear from the fact that $e_j'$ has the skew derivation property (1) given in Section 2 that $e_j'^d(z) = cz_d$ with $c \in \mathbb{C}(v)$ a non-zero constant, and the claim follows.
\end{proof}

\begin{proof}[Proof of Lemma \ref{js-commute}]
	
	Let $u \in U^{-}$. Then write $u = \sum_{n} F_j^{(n)} z_n$ with $z_n \in \mathcal{K}_j$ and write each $z_n = \sum_{m} F_k^{(m)} z_{n,m}$ with $z_{n,m} \in \mathcal{K}_k$. By Lemma \ref{fin}, $z_{n,m} \in \mathcal{K}_j$ as well. Because  $z_{n,m} \in \mathcal{K}_k \cap \mathcal{K}_j$, $\f_j \f_k . u = \sum_{n,m} F_j^{(n+1)} F_k^{(m+1)} z_{n,m}$ since $F_j \in \mathcal{K}_k$ and $F_k \in \mathcal{K}_j$. But because $[F_j, F_k]=0$, we get that
		$$\sum_{n,m} F_j^{(n+1)} F_k^{(m+1)} z_{n,m} = \sum_{n,m}  F_k^{(m+1)}F_j^{(n+1)} z_{n,m} = \f_k \f_j . u,$$ as needed.
\end{proof}

\section{Sufficiency of Commutation and Crystal Serre Relations}\label{combo_charform}

In this section we finish the proof of Theorem \ref{maintheorem} by showing that the commutation and crystal Serre relations proved in Section 3 imply all equivalences $\f_{\iota_1}...\f_{\iota_n}.1 \equiv \f_{\iota'_1}...\f_{\iota'_m}.1$ in $\B$ for the quiver $Q(\omega, r)$ with $\omega > 1$ and $r \geq 0$. To achieve this we give a combinatorial analysis of Bozec's formula for the graded dimension of $U^-$.

\subsection{Character Formula}

Similar to the classical case, in \cite{addendum} Bozec gives an explicit character formula for the graded dimension of the algebra $U^-$ associated to any finite quiver $Q$.  The formula and its proof are analogous to the case of Kac-Moody algebras, for example in $\S11.13$ of \cite{Kac}.  We state here this character formula in the special case of the quiver $Q(\omega, r)$ for $\omega > 1, r \geq 0$, which we denote as $\Ch U^-$. 

Fix $r \geq 0$.  Let $x_1, ..., x_r, y$ be commuting indeterminates, let $R:= \{ x_1, \ldots, x_r\}$, let $\mathcal{P}(R)$ denote the powerset of $R$, and for a subset $S \subset R$ let $\pi(S)$ denote the product of all elements in $S$. Then define

$$(\Ch U^-)^{-1} := \sum_{p \in \mathcal{P}(R)} (-1)^{|p|} \pi(p) \left( 1-\frac {\pi(p)y}{1-\pi(p)y} \right).$$

The coefficient of $y^{n} x_1^{m_1} \cdots x_r^{m_r}$ in the power series $\Ch U^-$ gives the number of elements in $\B$ of degree $-ni - m_1j_1 - \ldots - m_rj_r$ for the quiver $Q(\omega, r)$ for any $\omega > 1$.

\subsection{A Combinatorial Description}

	\begin{definition}
		
		Let $\vec{\iota}$ be a finite sequence of elements of $I_\infty$. $\vec{\iota}$ is called \emph{steep} if it is of the following form: 
		
		$$\vec{\iota} = (j_1^{p_{0,1}}, \ldots, j_r^{p_{0,r}}, (i, c_1), j_1^{p_{1,1}},, \ldots, j_r^{p_{1,r}}, (i, c_2), \ldots, (i,c_n), j_1^{p_{n,1}}, \ldots j_r^{p_{n,r}}), $$ where the notation $j_k^{p_{i, k}}$ indicates $p_{i, k}$ successive occurrences of $j_k$, and where $c_i \geq p_{i,k}$ for $1 \leq i \leq n$ and $1 \leq k \leq r$. \end{definition}
	
	\begin{definition}
		
		Given a sequence $\vec{\iota} = (\iota_1, \ldots, \iota_n)$ of elements of $I_\infty$, define $\f_{\vec{\iota}} := \f_{\iota_1} \circ \cdots \circ \f_{\iota_n}$. 		
	\end{definition}
	
	\begin{lemma} \label{steep-transform}
		For any finite sequence $\gamma$ of elements of $I_\infty$, there is a steep sequence $\gamma'$ such that $\f_\gamma \equiv \f_{\gamma'}$ on $\B$. 
	\end{lemma}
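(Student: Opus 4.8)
The plan is to start from an arbitrary finite sequence $\gamma$ of elements of $I_\infty$ and apply the two families of relations already established — the commutation relations $\f_j\f_{j'}\equiv\f_{j'}\f_j$ for $j,j'\in\{j_1,\dots,j_r\}$ (Lemma \ref{js-commute}) and the $l$-th order crystal Serre relations $\f_j\f_{(i,l)}\f_j^l\equiv\f_{(i,l)}\f_j^{l+1}$ — to rewrite $\gamma$ into steep form without changing the operator it induces on $\B$. Since these relations only permute and regroup the $\f$'s, they never change the degree (the multiset of vertices appearing, counted with multiplicity), so the target steep sequence will automatically have the same length and degree as $\gamma$.

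First I would use the commutation relations to sort each maximal block of consecutive real-vertex operators: within any run $\f_{j_{k_1}}\cdots\f_{j_{k_m}}$ lying between two consecutive imaginary operators (or at the ends of $\gamma$), freely commute to group all occurrences of $j_1$ first, then all occurrences of $j_2$, and so on, putting each block into the form $\f_{j_1}^{p_1}\cdots\f_{j_r}^{p_r}$. This already matches the "$j_1^{p_{i,1}},\dots,j_r^{p_{i,r}}$" shape demanded between the imaginary operators; what remains is to enforce the steepness inequality $c_i\ge p_{i,k}$. For that I would process the imaginary operators $\f_{(i,c_1)},\dots,\f_{(i,c_n)}$ one at a time, say from right to left (or left to right — whichever makes the induction cleanest): if some block of $j_k$'s immediately to the \emph{left} of $\f_{(i,c_i)}$ has length exceeding $c_i$, i.e. the local picture is $\cdots\f_{j_k}^{q}\f_{(i,c_i)}\f_{j_k}^{c_i}\cdots$ with $q>c_i$ after having already arranged $c_i$ copies of $j_k$ to its right (using commutation to bring $j_k$'s from the block on the right of $\f_{(i,c_i)}$ adjacent to it), apply the crystal Serre relation $\f_{j_k}\f_{(i,c_i)}\f_{j_k}^{c_i}\equiv\f_{(i,c_i)}\f_{j_k}^{c_i+1}$ to push one $\f_{j_k}$ across $\f_{(i,c_i)}$ to the right, decreasing the left count by one and increasing the right count by one; iterate until the block on the left of $\f_{(i,c_i)}$ has size $\le c_i$. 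Repeating for each $k$ and then re-sorting the real-vertex blocks with commutation relations after each move keeps everything in the block-sorted shape.

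The main obstacle is bookkeeping for termination: each application of a crystal Serre relation moves a real operator from the left of an imaginary operator to its right, so I would set up a monovariant — for instance, the sum over all imaginary operators $\f_{(i,c_i)}$ of the total number of real operators lying to their left, or more carefully a lexicographic quantity processing imaginary operators in a fixed order — and argue it strictly decreases (or that once $\f_{(i,c_i)}$'s left-block is shrunk to size $\le c_i$ it stays that way under subsequent moves affecting operators further left). One has to check that fixing the constraint at $\f_{(i,c_i)}$ does not spoil it at $\f_{(i,c_{i+1})}$, which is why processing in a single consistent direction matters: moves at $\f_{(i,c_i)}$ only add real operators to blocks strictly between $\f_{(i,c_i)}$ and the next imaginary operator on its right, and by the time we treat $\f_{(i,c_i)}$ everything to its right is already steep, so we only ever push operators into already-steep territory and must re-shrink locally there. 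Making this ordering and the monovariant precise — and noting the trailing block $\f_{j_1}^{p_{n,1}}\cdots\f_{j_r}^{p_{n,r}}$ after the last imaginary operator is unconstrained, so real operators pushed past $\f_{(i,c_n)}$ land harmlessly there — is the crux of the argument; once that is set up, each individual rewriting step is an instance of a relation proved in Section \ref{algebra}, and the conclusion $\f_\gamma\equiv\f_{\gamma'}$ on $\B$ follows.
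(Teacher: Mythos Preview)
Your overall strategy --- sort each real-vertex block using the commutation relations, then use the crystal Serre relation to fix the size constraints block by block with a monovariant for termination --- is exactly what the paper's one-line proof is gesturing at, and is the right idea. However, you have the direction of the steepness constraint (and hence the direction in which the Serre relation must be applied) reversed.

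Reread the definition of \emph{steep}: the condition is $c_i \geq p_{i,k}$, and in the displayed sequence $p_{i,k}$ is the exponent of $j_k$ in the block \emph{after} $(i,c_i)$, not before. Thus it is the \emph{leading} block $p_{0,k}$ that is unconstrained, while the trailing block $p_{n,k}$ is bounded by $c_n$. Consequently the Serre relation must be used in the direction $\f_{(i,l)}\f_{j_k}^{\,l+1}\equiv\f_{j_k}\f_{(i,l)}\f_{j_k}^{\,l}$, pushing a $\f_{j_k}$ \emph{leftward} across $\f_{(i,l)}$ whenever the block to the right of $(i,c_i)$ has more than $c_i$ copies of $j_k$. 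Your stated move does the opposite, and in fact your version can get stuck: on $\gamma=(j,j,(i,1))$ you would detect a ``violation'' (left block of size $2>1$) but cannot apply the forward Serre relation because there are no $\f_j$'s to the right of $\f_{(i,1)}$; meanwhile this $\gamma$ is already steep in the paper's sense. Conversely, $((i,1),j,j)$ is \emph{not} steep, yet your procedure would declare it finished.

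With the orientation corrected, your termination argument works cleanly: each leftward application of the Serre relation strictly decreases the sum over all imaginary entries of the number of real entries to their right, and the excess real operators eventually accumulate in the unconstrained leading block. Processing the imaginary entries from right to left (fix the constraint at $(i,c_n)$ first, then $(i,c_{n-1})$, etc.) makes the single-pass version go through without backtracking.
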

	
	\begin{proof}
		
	The lemma follows from the commutation relations and the crystal Serre relations proved in Section 3.
	\end{proof}
	
	\begin{definition} The \emph{degree} of a sequence $\vec{\iota} = (\iota_1, ..., \iota_n)$ of elements of $I_\infty$ is $|\vec{\iota}| := -\sum_{k = 1}^n |\iota_k|$, the degree of the associated composition of Kashiwara operators $\f_{\vec{\iota}}$.\end{definition}
	
	\begin{lemma}\label{number-steep}
		
		The number of steep sequences of a given degree $\mu$ is equal to $\#\B[\mu]$. 
		%The number of steep elements of $\B$ for a particular degree is the same as the graded dimension given by $\Ch U^-$.	
	\end{lemma}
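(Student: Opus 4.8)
The plan is to compare generating functions. By the character formula recalled above, $\#\B[\mu]$ for $\mu = -ni - \sum_{k=1}^{r} m_k j_k$ is the coefficient of $y^{n}x_1^{m_1}\cdots x_r^{m_r}$ in $\Ch U^-$, so it suffices to show that the multivariate generating function
\[
G \;:=\; \sum_{\vec{\iota}\text{ steep}} y^{\,n(\vec{\iota})}\prod_{k=1}^{r} x_k^{\,m_k(\vec{\iota})} \;\in\; \ints[[y, x_1, \ldots, x_r]]
\]
equals $\Ch U^-$, where $n(\vec{\iota})$ and the $m_k(\vec{\iota})$ are the nonnegative integers with $|\vec{\iota}| = -n(\vec{\iota})\, i - \sum_k m_k(\vec{\iota})\, j_k$.

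First I would unwind the data of a steep sequence. Such a sequence is determined by an integer $N \geq 0$ (the number of letters of the form $(i,\cdot)$), positive integers $c_1, \ldots, c_N$, a leading tuple $(p_{0,1}, \ldots, p_{0,r})$ of nonnegative integers, and for each $1 \leq t \leq N$ a tuple $(p_{t,1}, \ldots, p_{t,r})$ with $0 \leq p_{t,k} \leq c_t$; one then has $n(\vec{\iota}) = \sum_{t=1}^{N} c_t$ and $m_k(\vec{\iota}) = \sum_{t=0}^{N} p_{t,k}$. Summing the weight $y^{n(\vec{\iota})}\prod_k x_k^{m_k(\vec{\iota})}$ over all such data, the leading $j$-block contributes the factor $\prod_{k=1}^{r}(1-x_k)^{-1}$; each letter $(i,c_t)$ together with the $j$-block immediately following it contributes $y^{c_t}\prod_{k=1}^{r}(1 + x_k + \cdots + x_k^{c_t}) = y^{c_t}\prod_{k=1}^{r}\frac{1-x_k^{c_t+1}}{1-x_k}$, independently for each $t$; and summing over $N \geq 0$ with a geometric series yields
\[
G \;=\; \frac{1}{\prod_{k=1}^{r}(1-x_k)}\cdot\frac{1}{1-H}, \qquad H \;:=\; \sum_{c \geq 1} y^{c}\prod_{k=1}^{r}\frac{1-x_k^{c+1}}{1-x_k}.
\]
Since $H \in (y)$, the factor $1-H$ is a unit, and $G$ has constant term $1$.

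Next I would invert: $G^{-1} = \bigl(\prod_k(1-x_k)\bigr)(1-H)$, and since $\prod_k(1-x_k)\cdot\prod_k\frac{1-x_k^{c+1}}{1-x_k} = \prod_k(1-x_k^{c+1})$ this becomes
\[
G^{-1} \;=\; \prod_{k=1}^{r}(1-x_k) \;-\; \sum_{c \geq 1} y^{c}\prod_{k=1}^{r}(1-x_k^{c+1}).
\]
Expanding $\prod_{k=1}^{r}(1-x_k^{c+1}) = \sum_{p \in \mathcal{P}(R)}(-1)^{|p|}\pi(p)^{c+1}$ (and $\prod_k(1-x_k) = \sum_{p}(-1)^{|p|}\pi(p)$), and for each fixed $p$ summing the geometric series $\sum_{c\geq 1}(y\pi(p))^{c} = \frac{\pi(p)y}{1-\pi(p)y}$, one recovers exactly
\[
(\Ch U^-)^{-1} \;=\; \sum_{p \in \mathcal{P}(R)}(-1)^{|p|}\pi(p)\left(1 - \frac{\pi(p)y}{1-\pi(p)y}\right).
\]
As both $G$ and $\Ch U^-$ have constant term $1$, they are units in $\ints[[y,x_1,\ldots,x_r]]$, so $G^{-1} = (\Ch U^-)^{-1}$ forces $G = \Ch U^-$; comparing the coefficient of $y^{n}x_1^{m_1}\cdots x_r^{m_r}$ on both sides then proves the lemma.

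The step I expect to be the main obstacle is the translation of ``steep sequence'' into the factored form for $G$ — in particular, keeping straight the asymmetry between the leading $j$-block, whose exponents $p_{0,k}$ are unconstrained (giving the $\prod_k(1-x_k)^{-1}$ factor), and the interior $j$-blocks, whose exponents are capped by the preceding $c_t$, and making sure empty blocks and the case $N=0$ are included so that the sum genuinely factors as a geometric series. Once $G$ is expressed this way, its identification with $\Ch U^-$ is a routine formal computation.
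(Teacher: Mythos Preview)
Your proof is correct. The factorization of $G$ from the definition of steep sequences is clean, and the inversion to recover $(\Ch U^-)^{-1}$ is a straightforward geometric-series computation; the constant-term argument at the end is the right way to conclude.

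Your route is genuinely different from the paper's. The paper does not compute a closed form for the steep-sequence generating function. Instead it rewrites $(\Ch U^-)^{-1}\cdot \Ch U^- = 1$ as a recursion $r\cdot \Ch U^- = \Ch U^- - 1$ with $r = 1 - (\Ch U^-)^{-1}$, extracts from this an explicit recurrence for the coefficients $c(n,\vec m)$, and then verifies combinatorially---via a case split on whether a steep sequence ends in some $\f_{j_i}$ or in an $\f_{i,l}$, together with inclusion--exclusion over the subsets $p\subset R$---that the steep-sequence counts satisfy the same recurrence and initial values. Your approach bypasses this casework by building the ``leading block $\times$ geometric series in $H$'' factorization directly; the paper's approach, on the other hand, makes the combinatorics of the recursion visible and matches more transparently the shape of the given expression for $(\Ch U^-)^{-1}$. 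Both arrive at the same identity $G^{-1}=(\Ch U^-)^{-1}$, just from opposite ends.
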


	\begin{proof}

		We derive a recursion for the coefficients of $\Ch U^-$ and show that the number of steep sequences of a particular degree also satisfy the same recursion and initial values.  
		
		Notice that  $r \Ch U^- = \Ch U^- -1$ where
		
		$$ r = 1- (\Ch U^-)^{-1} = \frac {y}{1-y}  -  \sum_{p \in \mathcal{P}(R) \setminus \emptyset} (-1)^{|p|} \pi(p) \left( 1-\frac {\pi(p)y}{1-\pi(p)y} \right).$$

		The equality $r \Ch U^- = \Ch U^- - 1$ can be interpreted as a recursion on the coefficients of the power series because $r$ has zero constant term. We let $c(n, \vec{m}):= c(n,m_1, \ldots, m_r)$ denote the coefficient of $y^{n} x_1^{m_1} \cdots x_r^{m_r}$ in $\Ch U^-$. We now make this recursion explicit.
		Note
		
		$$r = \sum_{k \geq 1} y^k - \sum_{p \in P(R) \setminus \emptyset} (-1)^{|p|} \pi(p)  + \sum_{k \geq 1} \sum_{p \in P(R) \setminus \emptyset} (-1)^{|p|} \pi(p)^{k+1} y^k.$$
		
		For $p \in \mathcal{P}(R)$, let $\vec{p}$ denote the tuple $(\delta_{1}, \ldots, \delta_r)$ where $\delta_i = 1$ if $x_i \in p$ and $0$ otherwise. Then $r\Ch U^- = \Ch U^- -1 $ implies for $(n, \vec{m}) \neq (0,0)$, 
		
		\begin{equation}\label{recursion}
		c(n,\vec{m}) = \sum_{k \geq 1} c(n-k,\vec{m}) - \sum_{p \in P(R) \setminus \emptyset}  (-1)^{|p|} c(n, \vec{m} - \vec{p}) + \sum_{k \geq 1} \sum_{p \in P(R) \setminus \emptyset} (-1)^{|p|}c(n-k,\vec{m}-(k+1)\vec{p}).
		\end{equation}
		
		Note $c(0,\vec{0}) = 1$, and for negative $n$ or negative $m_i$, $c(n,\vec{m}) = 0$. Thus the number of steep sequences of a particular degree satisfies the same intial values.
		
		%Let $$\sum_{n, m_1, \ldots, m_r \geq 0} y^n x_1^{m_1} \cdots x_r^{m_r}$$ be the generating function for the number of steep sequences. 
		
		We explain why the recursion given in Equation \eqref{recursion} also gives a recursion for the number of steep sequences of a given degree. There are 2 cases to consider. In the first case, the sequence ends with $\f_{j_i}$ for some $r \geq i \geq 1$. Since the $\f_{j_i}$ commute by Lemma \ref{js-commute}, the number of sequences ending with a $\f_{j_i}$ can be counted with the principle of inclusion-exclusion, noting that if a steep sequence ends with $\f_{j_i}$ then it remains steep. This corresponds to 
		
		$$- \sum_{p \in P(R) \setminus \emptyset}  (-1)^{|p|} c(n, \vec{m} - \vec{p}).$$

		If the sequence ends with a $\f_{i,l}$, which is a disjoint condition from ending with an $\f_{j_i}$, this corresponds to
		
		$$\sum_{k \geq 1} c(n-k,\vec{m}).$$
		
		However, we must also subtract the steep sequences to which concatenating a $\f_{i,l}$ makes the sequence not steep. Such sequences end with $\f_{j_1}^{p_1} \cdots \f_{j_r}^{p_r}$ where for at least 1 index $k$, $p_k > l$. Such sequences are counted by 
		
		$$\sum_{k \geq 1} \sum_{p \in P(R) \setminus \emptyset} (-1)^{|p|}c(n-k,\vec{m}-(k+1)\vec{p})$$
		
		The recursion in Equation \eqref{recursion} follows. Since the number of steep sequences satisfy the same base case and recursion as $\Ch U^-$, we have shown the lemma. 
	\end{proof}
		%There are several cases to consider: either the sequence ends in a $\f_{i,l}$ or $\f_{j_i}$ for some $i$. If the sequence ends with $\f_{j_i}$, then we must multiply by $x_1, \ldots, x_r$. However note that we have overcounted the sequences that end with both $x_1x_2$, and etc., so by a inclusion, exclusion argument, we see exactly that the sequences we must cont from are exactly the subset we choose from the formula with the appropriate signage. 
		
		%By the principal of inclusion-exclusion, the number of elements in the set of sequences ending in $\f_{j_1}, \ldots, \f_{j_r}$.
		
		%If a sequence ends with a $\f_{i,l}$, this corresponds to multiplying by $y+y^2+ \ldots$. 
		
		%However, we also have to consider subtracting all the paths from the degree that become non-steep when an $\f_{i,l}$ is concatendated to the end. These are the paths that end with some powers of $\f_j$ that have greater powers than $l$. 
		
		%this corresponds to ending with a with $y^p$ and also powers of $x_1^p \ldots x_r^p$. 
%	\end{proof}
	
	We can now finish the proof of our main theorem (Theorem \ref{maintheorem}):
	
	\begin{proof}[Proof of Theorem \ref{maintheorem}] We saw in Section 3 that the commutation and crystal Serre relations of Theorem \ref{maintheorem} hold.  It follows, as in Lemma \ref{steep-transform}, that every element $\f_{\vec{\iota'}}.1$ of $\B$ is equal by a series of applications of the commutation and crystal Serre relations to an element $\f_{\vec{\iota}}.1$ for a steep sequence $\vec{\iota}$.  It follows by Lemma \ref{number-steep} that, for any degree $\mu$, the elements $\f_{\vec{\iota}}.1$ for $\vec{\iota}$ a steep sequence of degree $\mu$ are pairwise inequivalent and comprise every element of $\B[\mu]$.  The theorem follows.\end{proof}
		 
		%Each sequence of Kashiwara operators is equivalent by some series of fundamental transformations to some steep sequence of Kashiwara operators. Furthermore, the fundamental transformations can be inverted to take a steep sequence to an original representative. 
		
	%	$\L$ quotiented by the equivalence relation $~$ where for $b, b' \in \L$, $b ~ b'$ if they can be transformed to the same steep sequence gives that the number of elements of $\L$ for a given degree is equal to at most the number of steep sequences of the degree. 
		
	%	However, by Lemma \ref{number-steep}, we know that the number of elements of $\L$ for a given graded dimension is exactly equal to the number of steep sequences of that graded dimension, showing that there are no further relations amongst the steep sequences.
		
	%	Thus the fundamental transformation is the only relation amongst sequences of Kashiwara operators, and every equivalence class has a steep sequence as a canonical representative.
		
%	\end{proof}

\end{document}